\documentclass{amsart}
\usepackage{amsmath,amssymb,amsthm, tikz, enumerate}
\usepackage{xcolor}

\newtheorem{theorem}{Theorem}%[section]
\newtheorem{lemma}[theorem]{Lemma}
\newtheorem{corollary}[theorem]{Corollary}

\newtheorem{proposition}[theorem]{Proposition}

\theoremstyle{definition}
\newtheorem{definition}[theorem]{Definition}

\newcommand{\co}{\colon\thinspace}

\newcommand{\abs}[1]{\left\lvert #1 \right\rvert}

\DeclareMathOperator{\Area}{Area}
\DeclareMathOperator{\Vol}{Vol}

\begin{document}

\title[Simplicial volume and $Z_0$]{Simplicial volume and $0$-strata of \\separating filtrations}
\author{Hannah Alpert}
\address{Auburn University, 221 Parker Hall, Auburn, AL 36849}
\email{hcalpert@auburn.edu}

%\subjclass[2010]{53C23, 53C20}
%\keywords{Urysohn width, macroscopic dimension, macroscopic scalar curvature, filling radius}
% arXiv category: math.DG, math.MG

\begin{abstract}
We use Papasoglu's method of area-minimizing separating sets to give an alternative proof, and explicit constants, for the following theorem of Guth and Braun--Sauer: If $M$ is a closed, oriented, connected $n$-dimensional manifold, with a Riemannian metric such that every ball of radius $1$ in the universal cover of $M$ has volume at most $V_1$, then the simplicial volume of $M$ is at most the volume of $M$ times a constant depending on $n$ and $V_1$.
\end{abstract}

\maketitle

\section{Introduction}

The purpose of this paper is to prove the following theorem.

\begin{theorem}\label{thm:big}
Let $M$ be a closed, oriented, connected $n$-dimensional Riemannian manifold, and let $\Gamma = \pi_1(M)$.  Suppose that for all points $\widetilde{p}$ in the universal cover $\widetilde{M}$ of $M$, we have $\Vol B(\widetilde{p}, 1) \leq V_1$.  Then
\[\Vert M \Vert_{\Delta} \leq 16^n (n!)^2 \cdot V_1 \cdot \Vol M,\]
where $\Vert M \Vert_{\Delta}$ denotes the Gromov simplicial volume of $M$.  Furthermore, if $V_1 < \frac{1}{n!}$, then the image of the fundamental homology class of $M$ under the classifying map is zero in $H_*(B\Gamma; \mathbb{Q})$, so $\Vert M \Vert_{\Delta} = 0$.
\end{theorem}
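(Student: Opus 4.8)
The plan is to transplant Papasoglu's iterated minimal-separating-set technique --- originally designed for Urysohn-width estimates --- into the setting of simplicial volume, by constructing a $\Gamma$-equivariant \emph{separating filtration} of the universal cover and carefully accounting for its $0$-dimensional strata. Working equivariantly in $\widetilde M$, I would build a nested family $\widetilde M = Z_n \supseteq Z_{n-1} \supseteq \cdots \supseteq Z_1 \supseteq Z_0$, where each $Z_k$ is locally a $k$-dimensional set and $Z_0$ is a $\Gamma$-invariant discrete set of points. One passes from $Z_k$ to $Z_{k-1}$ by a wide/narrow dichotomy: in each region of $Z_k$ containing a large ball one inserts a minimal-mass hypersurface of $Z_k$ that separates that region into parts lying uniformly close to the new hypersurface, while regions that are already narrow are left untouched and contribute only lower-dimensional data. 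The coarea inequality bounds the mass of the inserted hypersurfaces in terms of the mass of $Z_k$, so iterating $n$ times controls the counting measure of $Z_0$; the volume hypothesis $\Vol B(\widetilde p,1)\le V_1$ both limits how many unit balls of $\widetilde M$ can cluster around a single stratum and bounds the local combinatorial complexity of the filtration near each point of $Z_0$.

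From the filtration I would read off a fundamental cycle of $M$: the top-dimensional simplices are indexed by flags $z\in(\text{piece of }Z_1)\subset\cdots\subset(\text{piece of }Z_n)$ emanating from the points $z\in Z_0$, arranged in a barycentric-subdivision-type pattern, while the narrow and positive-dimensional strata contribute a chain of ``simplicial dimension'' $<n$ that is homologically irrelevant in degree $n$. The number of flags through each point of $Z_0$ is bounded by $\const(n)\cdot V_1$, and the number of $\Gamma$-orbits of points of $Z_0$ is bounded by $\const(n)\cdot\Vol M$; multiplying these and tracking the constants through the $n$ coarea steps and through the flag combinatorics (the source of the $(n!)^2$) should yield $\Vert M\Vert_\Delta\le 16^{n}(n!)^{2}\cdot V_1\cdot\Vol M$.

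For the vanishing statement, the point is that a point of $Z_0$ witnesses that $\widetilde M$ is genuinely $n$-dimensional at unit scale: around such a point one finds nested minimal separating sets of codimensions $0,1,\dots,n$, and feeding this tower into the isoperimetric inequality from the bottom up forces $\Vol B(\widetilde z,1)\ge 1/n!$, the volume of a standard $n$-simplex. Under the hypothesis $V_1<1/n!$ this is impossible, so $Z_0=\emptyset$; then the quantitative estimate already forces $\Vert M\Vert_\Delta=0$, and moreover the open cover of $M$ underlying the filtration now has multiplicity at most $n=\dim M$, with each member having amenable (indeed trivial) image in $\Gamma$, so Gromov's vanishing theorem --- in the sharp form used by Braun--Sauer, letting the classifying map factor through an $(n-1)$-complex --- shows that the image of $[M]$ is zero in $H_*(B\Gamma;\mathbb Q)$.

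The main obstacle I anticipate is making the equivariant separating-set construction rigorous while keeping the estimates clean: minimal hypersurfaces in a noncompact cover can be singular or fail to exist, so one must work with an almost-minimizing or purely combinatorial substitute --- say, relative to a fine $\Gamma$-invariant triangulation --- without degrading the coarea bounds, the wide/narrow dichotomy, or the uniform-proximity property of the cuts, and one must propagate all constants through the $n$ stages to land exactly on $16^{n}(n!)^{2}$. Secondary difficulties are converting the flag combinatorics near $Z_0$ into an honest singular fundamental cycle with the claimed simplex count, and verifying that the cover associated to a filtration with $Z_0=\emptyset$ genuinely has members with amenable $\pi_1$-image in $\Gamma$.
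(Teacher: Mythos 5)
Your proposal captures the overall shape of the paper's argument---build a separating filtration $Z_n \supseteq \cdots \supseteq Z_0$ by area-minimizing at each level, control $Z_0$ via coarea estimates, read off a fundamental cycle whose top simplices are flags through points of $Z_0$ in a barycentric pattern, and bound the norm by counting these ``rainbow'' flags---but it is missing the single most substantive ingredient: the Cantor bundle (or, in the residually-finite case, passage to a finite cover). You say you will ``work equivariantly in $\widetilde M$,'' but this does not circumvent the problem. A $\Gamma$-equivariant $1$-separating filtration of $\widetilde M$ pushes down to a $1$-separating filtration of $M$, and when $M$ has homotopically nontrivial loops of length at most $2$ (which nothing in the hypothesis rules out), the downstairs color classes---images of balls of radius $1$ in $\widetilde M$---need not have trivial $\pi_1$-image in $M$. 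For instance, if $M$ contains a short circle, a unit ball in $\widetilde M$ can wrap around it. So the coloring fails to be $\pi_1$-killing and the amenable-reduction step breaks. Conversely, if you try to extract the fundamental cycle upstairs in $\widetilde M$ where balls are genuinely simply connected, $Z_0$ is infinite and the projection to $M$ is infinite-to-one, so you have no way to assign finite coefficients. This is exactly the circle that Braun--Sauer break with the Cantor bundle $X \times_\Gamma \widetilde M$: a free continuous $\Gamma$-action on a Cantor set $X$ with invariant probability measure lets you build the filtration equivariantly on $X\times\widetilde M$ (where unit balls are $\pi_1$-trivial and have non-self-intersecting thick orbits), and then the measure supplies the coefficients when you project. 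The paper has to re-prove all the coarea and packing lemmas in the thick-polyhedron setting precisely for this reason (Lemmas 4.2--4.4), and this is where most of the technical work lies; your proposal does not notice that any of this is needed.

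Two smaller points. First, the ``wide/narrow dichotomy'' you describe is an artifact of Guth's and Papasoglu's Urysohn-width arguments; the paper's filtration does not use it---at each level it simply takes an almost-area-minimizing $R$-separating subpolyhedron, and Lemma~\ref{lem:replacement} (a cut-and-replace coarea argument) does all the quantitative work. Your instinct that almost-minimizers rather than true minimizers are needed, and that everything should be run relative to a polyhedral structure, is correct and matches the paper. Second, your treatment of the vanishing case is slightly off: the quantitative bound $\|M\|_\Delta \le 16^n(n!)^2 V_1 \Vol M$ does not ``already force'' vanishing when $V_1 < 1/n!$, since the right-hand side is then merely small, not zero. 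The actual argument (your isoperimetric intuition is the right one) is that each point of $Z_0$ forces $\Vol B(\widetilde p,1)\ge 1/n!$ via the iterated coarea bound, so $Z_0=\emptyset$; hence there are no rainbow simplices at all, and Corollary~\ref{thm:rainbow-cor} gives directly that the straightened cycle is zero in $H_*(B\Gamma;\mathbb{Q})$. There is no need to invoke a multiplicity-$n$ amenable cover and Gromov's vanishing theorem, and indeed that route would require additional work to verify the multiplicity bound.
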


Only the constants $16^n (n!)^2 \cdot V_1$ and $\frac{1}{n!}$ are new.  The theorem, with non-explicit constants, is proved by Guth in~\cite{guth11} for the case where $M$ admits a hyperbolic metric; the proof applies to any manifold with residually finite fundamental group.  For the case where $V_1$ is close to zero, Guth adapts the same proof to show in~\cite{guth17} that $M$ has bounded Urysohn $(n-1)$-width; that is, $M$ admits a map to an $(n-1)$-dimensional simplicial complex, for which all fibers have diameter at most $2$.  Liokumovich, Lishak, Nabutovsky, and Rotman in~\cite{liokumovich2022filling} generalize this Urysohn width theorem to the case where $M$ is not necessarily a manifold, and Papasoglu in~\cite{papasoglu20} proves the same statement by a shorter method, similar to the method of minimal hypersurfaces which Guth in~\cite{guth10} adapts from Schoen and Yau's papers~\cite{schoen78, schoen79}.  Braun and Sauer in~\cite{braun21} adapt Guth's original proof to generalize it to the case where the fundamental group of $M$ is not necessarily residually finite.  In~\cite{sabourau22}, Sabourau proves a related result, that if the volume of $M$ is sufficiently small, then in the universal cover there are balls of all radii greater than $1$ with larger-than-hyperbolic volume.

Braun and Sauer speculate about whether the method of Papasoglu can be used to prove their theorems in a shorter way.  Here we give an affirmative answer: the method of Cantor bundles from~\cite{braun21} can serve the same role of removing the assumption that $\pi_1(M)$ is residually finite, while the method of Papasoglu replaces the more complicated method of Guth.  Because Papasoglu's method is so much simpler, it allows us to give explicit constants in the theorem statement.

Braun and Sauer in~\cite{braun21} also consider the extent to which the arguments about the Gromov simplicial norm, which involves homology with real or rational coefficients, can be adapted to integer coefficients.  In Theorem~\ref{thm:cor-source} and Corollaries~\ref{cor:first} and~\ref{cor:second} we recover their results except for an additional hypothesis that $\pi_1(M)$ should be torsion-free, and we give explicit constants.  Very roughly, the change to integer coefficients costs a factor of $(n+1)!$, using a method from Campagnolo and Sauer in~\cite{campagnolo19}.  Corollary~\ref{cor:first} shows that if $M$ is aspherical, then its integral foliated simplicial volume is at most $(n+1)16^n(n!)^3 \cdot V_1 \cdot \Vol M$, and Corollary~\ref{cor:second} gives the resulting upper bounds on the $\ell^2$-Betti numbers and the Euler characteristic.

Section~\ref{sec:background} gives the properties of simplicial volume that we need to prove Theorem~\ref{thm:big}.  In Section~\ref{sec:baby} we prove a version of Theorem~\ref{thm:big} with the extra assumption that every nontrivial loop in $M$ has length greater than $2$, as a warm-up for proving Theorem~\ref{thm:big} in Section~\ref{sec:proof}.  Section~\ref{sec:foliated} gives the results on the integral foliated simplicial volume.

\emph{Acknowledgments.} Thank you to Alexey Balitskiy and Caterina Campagnolo for helpful conversations about the ideas in this paper, as well as to an anonymous referee whose careful reading and suggestions caused many proof holes to be identified and fixed.  The author is supported in part by the Simons Foundation (Gift Number 965348).

\section{Preliminaries on simplicial volume}\label{sec:background}

This section includes some background information on simplicial volume that links it to the main part of the proof, which is about separating filtrations.  The first subsection includes the definition of simplicial volume and a theorem bounding the simplicial volume in terms of the number of rainbow simplices in a vertex-colored cycle.  The second subsection includes the definition of a separating filtration and a lemma bounding the number of rainbow simplices in terms of the number of points in the $0$-dimensional stratum of a separating filtration.

\subsection{Simplex straightening}

Let $z = \sum_i a_i \sigma_i$ be a singular $d$-cycle on a space $P$ (say, a CW complex) with real coefficients $a_i \in \mathbb{R}$ and simplices $\sigma_i \co \Delta^d \rightarrow P$.  The $L^1$ norm of $z$, denoted by $\abs{z}_1$, is $\sum_i \abs{a_i}$, and the \textit{\textbf{simplicial norm}} of a given homology class is the infimum of $\abs{z}_1$ over all cycles $z$ representing the homology class.  The simplicial norm of the class $[z]$ is denoted by $\Vert [z] \Vert_{\Delta}$.  The \textit{\textbf{simplicial volume}} of a closed, oriented, connected manifold $M$, denoted by $\Vert M \Vert_{\Delta}$, is the simplicial norm of the fundamental homology class of $M$ with $\mathbb{R}$-coefficients.  The simplicial norm was introduced by Gromov in~\cite{gromov82}.  

One foundational property of simplicial norm from~\cite[Sections 2.3, 3.3]{gromov82} (or see~\cite[Theorem 4.1]{ivanov87}) is that if $f \co P \rightarrow Q$ is a continuous map between path-connected spaces $P$ and $Q$ that induces an isomorphism between their fundamental groups, then the induced map on homology $f_* \co H_*(P) \rightarrow H_*(Q)$ preserves simplicial norm.  Thus, in particular, the simplicial volume of a manifold $M$ with fundamental group $\Gamma$ is equal to the simplicial norm of the image of $[M]$ in the homology of the classifying space $B\Gamma$.

Our use of simplicial norm in this paper is based on the following theorem, a special case of the Amenable Reduction Lemma from~\cite{gromov09} (or see~\cite{alpert16}).  Given a singular cycle $z$ on a space $P$, we can consider the domain spaces of all of its simplices, and imagine identifying any of their faces that cancel in $\partial z$; because $\partial z = 0$, every face is identified with at least one other face.  The resulting simplicial complex---or strictly speaking a $\Delta$-complex because some faces of the same simplex may be identified---has a simplicial cycle in which the coefficient of each simplex of the $\Delta$-complex equals the coefficient in $z$ of the corresponding singular simplex.  We can think of $z$ as the image of this simplicial cycle under the map from the $\Delta$-complex into $P$, and we can talk about the vertices and edges of $z$, meaning the vertices and edges of the $\Delta$-complex, equipped with their maps into $P$.  If $P$ is path-connected, we define a \textbf{\textit{$\pi_1$-killing vertex coloring}} of $z$ to be a way to assign colors to the vertices of $z$ such that for each color, if we take the union of all edges of $z$ for which both vertices are that color, then the map of this $1$-complex into $P$ induces the zero map on $\pi_1$.  (In particular, if $z$ does not contain any edges from a vertex to itself, then coloring every vertex a different color is $\pi_1$-killing.)  We define a \textbf{\textit{rainbow simplex}} of such a coloring to be any simplex in $z$ for which all $d+1$ vertices are different colors.

\begin{theorem}[\cite{gromov09}]\label{thm:rainbow}
Let $z = \sum_i a_i\sigma_i$ be a singular $d$-cycle on a path-connected CW complex $P$, with a $\pi_1$-killing vertex coloring.  Then the simplicial norm of the homology class of $z$ satisfies
\[\Vert [z] \Vert_{\Delta} \leq \sum_{\mathrm{rainbow}\ \sigma_i} \abs{a_i}.\]
\end{theorem}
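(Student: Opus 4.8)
The plan is to simplify the space $P$ until a representative of $[z]$ supported on the rainbow simplices can be read off directly. \emph{Step 1: Cone off the monochromatic parts.} For each color $c$, let $P_c\subseteq P$ be the union of the edges of $z$ with both endpoints colored $c$, together with the isolated $c$-colored vertices of $z$; this is a graph, and by hypothesis its inclusion into $P$ is zero on $\pi_1$. A map of a graph into $P$ that is zero on $\pi_1$ is null-homotopic, so $P_c\hookrightarrow P$ extends over the cone $C_c=\mathrm{Cone}(P_c)$; let $\widehat P$ be $P$ with one such cone attached for each color, with cone point $p_c\in C_c$. Attaching a cone on a subcomplex whose inclusion is zero on $\pi_1$ does not change the fundamental group, so the inclusion $i\co P\hookrightarrow\widehat P$ induces an isomorphism on $\pi_1$, and therefore by the property of simplicial norm recalled above $\Vert[z]\Vert_\Delta=\Vert i_*[z]\Vert_\Delta$. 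The effect of this step is that in $\widehat P$ every monochromatic edge of $z$ contracts through a cone, and each color $c$ acquires a canonical point $p_c$ toward which all $c$-colored vertices can be pushed.

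\emph{Step 2: Straighten, and discard the non-rainbow simplices.} Working upward through the skeleta, I would homotope each $\sigma_i$ to a simplex $\sigma_i'$, choosing each homotopy to depend only on the simplex and its coloring so that the homotopies agree on shared faces and $z'=\sum_i a_i\sigma_i'$ is a cycle with $[z']=i_*[z]$, and so that the monochromatic faces of $\sigma_i'$ are collapsed toward the points $p_c$ as far as the topology of $\widehat P$ permits, using the contractibility of the cones and the null-homotopies of Step 1. The aim is that, in the \emph{normalized} singular chain complex (which computes the simplicial norm), every non-rainbow $\sigma_i'$ becomes a degenerate simplex; then $z'$ represents $i_*[z]$ by a normalized cycle supported on the rainbow simplices, so that $\Vert[z]\Vert_\Delta=\Vert i_*[z]\Vert_\Delta\le\abs{z'}_1\le\sum_{\mathrm{rainbow}\ \sigma_i}\abs{a_i}$.

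I expect this second step to be the main obstacle. The naive version --- collapse each monochromatic edge to its cone point --- only makes a non-rainbow simplex have one constant edge, which does not force degeneracy, and collapsing higher-dimensional monochromatic faces is obstructed by the homotopy that the cones of Step 1 create; so the straightening must be organized more carefully, by an induction on dimension in which the loops and spheres one has to contract are exactly those already bounded by faces of $z$ itself. This is precisely the Amenable Reduction Lemma of~\cite{gromov09} specialized to the trivial (hence amenable) group, and I would either reproduce that argument or, following~\cite{alpert16}, package the whole straightening as a chain-level construction modeled on the contractible simplicial set on the set of colors. As a sanity check: if $z$ has no edge from a vertex to itself and each vertex is given its own color, then there are no monochromatic edges, every simplex is rainbow, and the inequality is the evident bound $\Vert[z]\Vert_\Delta\le\abs{z}_1$; the content of the theorem is entirely in using the $\pi_1$-killing hypothesis to avoid paying for the non-rainbow simplices.
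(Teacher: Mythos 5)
Your Step 1 and the overall shape of the reduction are consistent with the paper's argument: the paper likewise uses the invariance of the simplicial norm under $\pi_1$-isomorphisms and then homotopes the cycle so that each color class (all its vertices and the monochromatic edges among them) collapses to a single point, which is exactly what the $\pi_1$-killing hypothesis permits. One difference that matters: the paper does this after pushing $z$ forward to the classifying space $B\Gamma$ via $\alpha\co P\to B\Gamma$, rather than by coning inside $P$, because the second half of the argument needs the universal cover $E\Gamma$ to be contractible so that a simplex can be straightened onto an arbitrary $(d+1)$-tuple of vertices; your $\widehat P$ has the right fundamental group but there is no reason its universal cover is contractible, so straightening is not available there.

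The genuine gap is in Step 2, and it is the one you flag yourself: you aim to make every non-rainbow simplex \emph{degenerate} in the normalized chain complex, correctly note that collapsing a monochromatic edge only produces one constant edge (which does not force degeneracy), and then defer to the literature instead of supplying the mechanism. The mechanism in the paper is not degeneracy but antisymmetry. The straightening in $E\Gamma$ is defined to be alternating in the vertices: each straightened simplex is the signed average over all $(d+1)!$ permutations of its vertices (averaged further over stabilizing $\Gamma$-translates when $\Gamma$ has torsion), chosen compatibly with faces and the $\Gamma$-action. After the collapse, a non-rainbow simplex has two same-colored vertices joined by a now-constant edge, so its lift to $E\Gamma$ has two coincident vertices; the transposition exchanging them fixes the simplex while multiplying its straightening by $-1$, so the straightening is zero. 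Each rainbow simplex survives with its original coefficient, giving the stated bound. Without this antisymmetrization (or an equivalent chain-level alternating construction as in~\cite{alpert16}), your Step 2 does not go through, so as written the proposal identifies where the difficulty lies but does not resolve it.
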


\begin{proof}[Proof sketch]
On any space with contractible universal cover, such as the classifying space $B\Gamma$ where $\Gamma = \pi_1(P)$, we can define a notion of simplex straightening: for each $(d+1)$-tuple of points in the universal cover $E\Gamma$, the idea is to make a choice of $d$-simplex with those vertices, in a way that agrees with taking faces, translating by $\Gamma$, and permuting the vertices (with sign).  More formally, instead of literally choosing a single $d$-simplex, for the permutation invariance we need to choose the signed average of all of its permutations, and for the $\Gamma$-invariance, in the case where $\Gamma$ has finite-order elements we need to choose the average of any $\Gamma$-translations of our simplex that permute its vertices.  (It is not possible to ensure the $\Gamma$-invariance through cleverly symmetric choices of simplex, because each $\Gamma$-translation of a simplex has a different barycenter due to the freeness of the $\Gamma$-action, yet each permutation fixes the barycenter.)  Every cycle is homologous to its straightened version, and if a given simplex lifts to the universal cover in such a way that two of its vertices are the same, its straightening is zero because of the signed permutation invariance.

Thus, let $\alpha \co P \rightarrow B\Gamma$ be the classifying map.  The classes $[z]$ and $\alpha_*[z]$ have the same simplicial norm.  We homotope $\alpha(z)$ so that all vertices of each color and all edges among them go to a single point---this is possible because the coloring is $\pi_1$-killing.  Then we straighten.  The result is homologous to $\alpha(z)$, and each rainbow simplex contributes the same amount to the simplicial norm as it did in $[z]$, but each non-rainbow simplex becomes zero, because its two vertices of the same color lift to the same point in the universal cover.
\end{proof}

A special case of the theorem above is when there are no rainbow simplices.  In this case, we can conclude that the simplicial norm is zero, but we can also say something a bit stronger: the image of the homology class in the classifying space is torsion.

\begin{corollary}\label{thm:rainbow-cor}
Let $P$ be a path-connected CW complex, let $\Gamma = \pi_1(P)$, and let $\alpha \co P \rightarrow B\Gamma$ be the classifying map.  Let $z$ be a singular $d$-cycle on $P$, homologous in $H_*(P; \mathbb{R})$ to an element of $H_*(P; \mathbb{Z})$, and suppose that $z$ admits a $\pi_1$-killing vertex coloring that has no rainbow simplices.  Then the class $\alpha_*[z]$ is zero in $H_*(B\Gamma; \mathbb{Q})$.
\end{corollary}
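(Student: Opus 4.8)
The plan is to upgrade the straightening argument behind Theorem~\ref{thm:rainbow} to a statement about rational homology, the only new ingredient being a change-of-coefficients comparison. First I would make sense of the assertion. By hypothesis there is a class $w \in H_d(P;\mathbb{Z})$ whose image under $H_d(P;\mathbb{Z}) \to H_d(P;\mathbb{R})$ is $[z]$; let $w_{\mathbb{Q}} \in H_d(P;\mathbb{Q})$ be the image of $w$ under $H_d(P;\mathbb{Z}) \to H_d(P;\mathbb{Q})$. Then $\alpha_*w_{\mathbb{Q}} \in H_d(B\Gamma;\mathbb{Q})$ is a genuine rational class whose image in $H_d(B\Gamma;\mathbb{R})$ is exactly $\alpha_*[z]$, and the corollary is the statement that $\alpha_*w_{\mathbb{Q}} = 0$. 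The coefficient homomorphism $H_d(B\Gamma;\mathbb{Q}) \to H_d(B\Gamma;\mathbb{R})$ is injective: by the universal coefficient theorem it is identified with $-\otimes_{\mathbb{Q}}\mathbb{R}$ applied to the $\mathbb{Q}$-vector space $H_d(B\Gamma;\mathbb{Q})$, and for a $\mathbb{Q}$-vector space $V$ the map $v \mapsto v\otimes 1$ into $V\otimes_{\mathbb{Q}}\mathbb{R}$ is injective since $\mathbb{R}$ is free over $\mathbb{Q}$ with $1$ in a basis. Hence it suffices to show $\alpha_*[z] = 0$ in $H_d(B\Gamma;\mathbb{R})$.

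For that I would reuse the construction in the proof sketch of Theorem~\ref{thm:rainbow} with no changes: push $z$ forward along $\alpha$ to $B\Gamma$; since the coloring is $\pi_1$-killing and $\alpha$ induces an isomorphism on $\pi_1$, homotope $\alpha(z)$ so that for each color all vertices of that color, together with all edges of $z$ joining two such vertices, map to a single point; then straighten the resulting cycle in $B\Gamma$. Every simplex $\sigma_i$ of $z$ is non-rainbow, so two of its vertices, say the $k$-th and $\ell$-th, have the same color; the corresponding edge of $\sigma_i$ is an edge of $z$, so after the homotopy the $k$-th and $\ell$-th vertices of $\sigma_i$ are literally the same point of $B\Gamma$ and therefore coincide in any lift to $E\Gamma$. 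The straightening of a simplex with two equal vertices is zero, since it is the signed average of the simplices on all orderings of the vertex tuple and this average is negated by the transposition of the coinciding pair. Thus the straightened cycle is the zero chain, and since straightening yields a homologous cycle, $\alpha_*[z] = 0$ in $H_d(B\Gamma;\mathbb{R})$; combined with the previous paragraph this gives $\alpha_*w_{\mathbb{Q}} = 0$.

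I expect the main point requiring care to be precisely this shuffling between coefficient rings, and it is also what forces the integrality hypothesis on $z$. The straightening operator is intrinsically a real (at best rational) construction --- it involves signed averages over vertex orderings and, when $\Gamma$ has torsion, over finite simplex stabilizers --- so it does not see integral structure and on its own only produces vanishing in $H_*(B\Gamma;\mathbb{R})$. The hypothesis that $[z]$ is the reduction of an integral class is used, \emph{before} straightening, to realize $\alpha_*[z]$ as the image of an honest class in $H_d(B\Gamma;\mathbb{Q})$; injectivity of $H_d(B\Gamma;\mathbb{Q}) \to H_d(B\Gamma;\mathbb{R})$ then promotes the real vanishing to rational vanishing. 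Everything genuinely topological has already been carried out in the proof of Theorem~\ref{thm:rainbow}, so beyond this bookkeeping I do not anticipate further obstacles.
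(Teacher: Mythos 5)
Your proposal is correct and follows the same route as the paper: it invokes the straightening argument from Theorem~\ref{thm:rainbow} to get vanishing of $\alpha_*[z]$ in $H_*(B\Gamma;\mathbb{R})$, then uses injectivity of the change-of-coefficients map $H_*(B\Gamma;\mathbb{Q}) \to H_*(B\Gamma;\mathbb{R})$ to conclude. Your preliminary paragraph spelling out how the integrality hypothesis identifies $\alpha_*[z]$ with the image of an honest rational class is a welcome clarification of a point the paper's proof sketch leaves implicit, but it is bookkeeping on top of the same argument, not a different approach.
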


\begin{proof}[Proof sketch]
In the proof above, the straightening of the homotoped cycle $\alpha(z)$ is zero, because there are no rainbow simplices.  Thus, $\alpha_*[z]$ is zero in $H_*(B\Gamma; \mathbb{R})$.  But the change of coefficients from $H_*(B\Gamma; \mathbb{Q})$ to $H_*(B\Gamma; \mathbb{R})$ is just the tensor product with $\mathbb{R}$, which gives an injection, so $\alpha_*[z]$ is zero in $H_*(B\Gamma; \mathbb{Q})$ as well.  Alternatively, the simplex straightening in Theorem~\ref{thm:rainbow} is well-defined with coefficients in $\mathbb{Q}$ rather than in $\mathbb{R}$.
\end{proof}

\subsection{Triangulating a separating filtration}

The next lemma links the idea of counting rainbow simplices with Papasoglu's method of area-minimizing separating sets in~\cite{papasoglu20}.  Papasoglu's method is to find a filtration 
\[M = Z_n \supseteq Z_{n-1} \supseteq \cdots \supseteq Z_1 \supseteq Z_0,\]
such that each $Z_i$ is an $i$-dimensional set, minimizing $i$-dimensional area subject to the condition that every connected component of $Z_{i+1} \setminus Z_i$ is contained in a ball of some radius $R$ in $M$.  We define these sets in terms of Riemannian polyhedra as in~\cite{nabutovsky19}.  Informally, we need Riemannian polyhedra to be a class of spaces that includes closed manifolds and allows cutting and gluing as in Lemma~\ref{lem:replacement}.  More formally, we refer to a closed union of faces (that is, a subcomplex) of a standard Euclidean simplex as a polyhedron.  Note that a polyhedron is required to have only finitely many faces.  A pure $d$-dimensional polyhedron has the property that every maximal face has dimension $d$.  A function on a polyhedron is defined to be smooth if its restriction to the closure of every face is smooth.  We define a \textbf{\textit{Riemannian $d$-polyhedron}} to be a pure $d$-dimensional polyhedron with a Riemannian metric on every maximal simplex, such that the metrics agree on common faces.  In our setting, the Riemannian polyhedra $Z_i$ are embedded in the Riemannian manifold $M$, so their Riemannian metrics are induced from $M$.  If $P$ is a pure $d$-dimensional Riemannian polyhedron, we denote its $d$-dimensional volume by $\Area_d(P)$.

A \textbf{\textit{codimension $1$ subpolyhedron}} of a Riemannian $d$-polyhedron $P$ is a pure $(d-1)$-dimensional polyhedron $Z$ that is a subcomplex of some smooth triangulation of $P$ and satisfies the \textbf{\textit{smooth nesting condition}}: the relative interior of every face of $Z$ is contained in the relative interior of a face of $P$ of strictly greater dimension.  By a smooth triangulation (or refinement) of $P$ we mean a polyhedron diffeomorphic to $P$, for which every closed simplex of $P$ is a subcomplex.  When checking the smooth nesting condition, we use the original polyhedron structure of $P$, rather than the refinement; the polyhedron structure on $Z$ is the one arising from the subcomplex of the refinement.
The subpolyhedron $Z$ is a Riemannian polyhedron with the Riemannian metric induced from $P$.

Given any $R > 0$ we say that $Z$ is \textbf{\textit{$R$-separating}} in $P$ if $Z$ is a codimension $1$ subpolyhedron of $P$ such that every connected component of $P \setminus Z$ is contained in a ball of radius $R$.  Because $Z$ is pure $(d-1)$-dimensional, we know that $\Area_{d-1}(Z)$ is well-defined; later, in Lemmas~\ref{lem:replacement} and~\ref{lem:volball} we will be choosing $Z$ to be nearly area-minimizing among $R$-separating subpolyhedra of $P$.

\begin{figure}
\begin{center}
\begin{tikzpicture}
\draw (0, 2)--(-2, 2)--(-2, -2)--(0, -2)-- (1.00000000000000, -0.267949192431123) -- (1.00000000000000, 3.73205080756888) --cycle;
\draw[very thick] (-2, 0.333333333333333) -- (0, 1.00000000000000) -- (0.750000000000000, 2.79903810567666) ;
\draw[very thick] (0.500000000000000, 2.86602540378444) -- (1.00000000000000, 2.73205080756888) ;
\draw[fill=white] (0, 2)-- (1.73205080756888, 1.00000000000000) -- (1.73205080756888, -3.00000000000000) --(0, -2)--cycle;
\draw[very thick] (-1.50000000000000, 0.500000000000000) -- (0, -0.700000000000000) -- (0.866025403784439, -2.00000000000000) -- (0.866025403784439, 0.833333333333333) -- (0, 1.00000000000000) ;
\draw[very thick] (0.866025403784439, 0.833333333333333) -- (1.29903810567666, 1.25000000000000) (1.40729128114971, -2.81250000000000) -- (0.866025403784439, -2.00000000000000) -- (0.866025403784439, -2.50000000000000) ;
\draw[fill=gray!40] (-0.333333333333333, 0.888888888888889) circle (.07);
\draw[fill=gray!40] (-0.666666666666667, -0.166666666666667) circle (.07);
\draw[fill=gray!40] (0.866025403784439, 5.55111512312578e-17) circle (.07);
\draw[fill=gray!40] (0.500000000000000, 2.19935873711777) circle (.07);
\end{tikzpicture}
\end{center}
\caption{In an $R$-separating filtration, $Z_0$ (shown as gray dots) separates $Z_1$ (shown as thick lines) into pieces of size at most $R$, and $Z_1$ separates $Z_2$ (shown as three planes) into pieces of size at most $R$.  The smooth nesting condition implies that $Z_0$ avoids the points where multiple edges of $Z_1$ come together, and $Z_1$ does not run along the line where multiple planes of $Z_2$ come together.}\label{fig:rsep}
\end{figure}
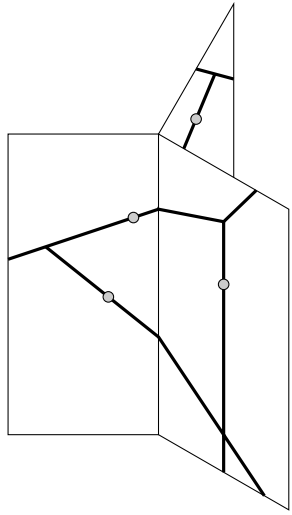

We define an \textbf{\textit{$R$-separating filtration}} of $M$ to consist of Riemannian polyhedra
\[M = Z_n \supseteq Z_{n-1} \supseteq \cdots \supseteq Z_1 \supseteq Z_0,\]
such that each $Z_i$ is an $R$-separating subpolyhedron of $Z_{i+1}$.  A neighborhood in an example of an $R$-separating filtration is depicted in Figure~\ref{fig:rsep}.  The smooth nesting condition implies that every point of $Z_0$ has a neighborhood with a diffeomorphism that sends our filtration to the filtration
\[\mathbb{R}^n \supseteq \mathbb{R}^{n-1} \supseteq \cdots \supseteq \mathbb{R}^1 \supseteq \mathbb{R}^0.\]
Thus, the following lemma shows that if we triangulate consistent with the filtration, we can produce a $\pi_1$-killing coloring with a controlled number of rainbow simplices.

\begin{lemma}\label{lem:triangulate}
Let $M$ be a closed, connected $n$-dimensional Riemannian manifold with an $R$-separating filtration 
\[M = Z_n \supseteq Z_{n-1} \supseteq \cdots \supseteq Z_1 \supseteq Z_0.\]
Suppose that for every ball of radius $R$ in $M$, the map on $\pi_1$ induced by its inclusion into $M$ is the zero map.  Then there is a triangulation of $M$ with a $\pi_1$-killing coloring, such that the number of rainbow simplices is $2^n \cdot \#Z_0$.
\end{lemma}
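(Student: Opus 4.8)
The plan is to build the triangulation and the coloring more or less by hand, reading off the local normal form of the filtration near the points of $Z_0$. For the triangulation, recall from above that each $p\in Z_0$ has a neighborhood carrying coordinates in which the filtration becomes $\mathbb{R}^n\supseteq\mathbb{R}^{n-1}\supseteq\cdots\supseteq\mathbb{R}^0$, with $Z_i$ the span of the first $i$ coordinate axes. I would fix pairwise disjoint such neighborhoods and triangulate a small $\ell^1$-ball $\{\sum_k|x_k|\le\delta\}$ around each $p$ as a cross-polytope: its top simplices are the $2^n$ cones $O_\varepsilon=\mathrm{conv}\{p,\varepsilon_1\delta e_1,\ldots,\varepsilon_n\delta e_n\}$, one for each sign vector $\varepsilon\in\{\pm1\}^n$. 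Each $Z_i=\mathbb{R}^i$ meets $O_\varepsilon$ exactly in the face $\mathrm{conv}\{p,\varepsilon_1\delta e_1,\ldots,\varepsilon_i\delta e_i\}$, so the filtration restricts to a subcomplex of this local triangulation, and the only top simplices meeting $p$ are the $O_\varepsilon$. Then I would extend this to a triangulation of all of $M$ in which every $Z_i$ is a subcomplex, which is available from the standard (relative) triangulation theory for the Riemannian polyhedra of~\cite{nabutovsky19}.

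For the coloring, write $S_i=Z_i\setminus Z_{i-1}$ with $Z_{-1}=\emptyset$, so that each vertex lies in a unique stratum $S_i$. Assign a distinct color to each point of $S_0=Z_0$ and to each connected component of each $S_i$ with $i\ge1$; color a vertex in $S_i$ with $i\ge1$ by the component of $S_i$ containing it, and a vertex in $Z_0$ by its own color. This coloring is $\pi_1$-killing: a point of $Z_0$ is isolated and so supports no monochromatic edge, while if an edge $e$ has both endpoints in one component $C$ of a stratum $S_i$ with $i\ge1$, then $e\subseteq Z_i$ since $Z_i$ is a subcomplex, $e\cap Z_{i-1}=\emptyset$ since $Z_{i-1}$ is a subcomplex containing neither endpoint of $e$, and hence $e\subseteq C$ by connectedness; as $Z_{i-1}$ is $R$-separating in $Z_i$, the set $C$ lies in a ball of radius $R$ of $M$, whose inclusion into $M$ is zero on $\pi_1$ by hypothesis, so the union of all $C$-colored edges is too.

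The key step is the structural claim that every rainbow $n$-simplex $\sigma$ has exactly one vertex in each of the strata $S_0,S_1,\ldots,S_n$. I would prove this by downward induction on $i$: once $\sigma$ is known to have exactly one vertex in each of $S_n,\ldots,S_{i+1}$, the remaining $i+1$ vertices span a face $\tau\subseteq Z_i$, and I claim $\tau$ is a rainbow $i$-simplex with exactly one vertex in $S_i$. Indeed $\tau$ is not contained in $Z_{i-1}$ for dimension reasons, so it has a vertex in $S_i$; and $\tau\setminus Z_{i-1}$ is star-shaped about the barycenter of $\tau$---every point of the open segment from the barycenter to a point off $Z_{i-1}$ has all barycentric coordinates positive, hence lies in the interior of $\tau$, which is disjoint from the lower-dimensional subcomplex $Z_{i-1}$---hence connected, hence contained in a single component of $S_i$, so the rainbow condition forces a unique vertex of $\tau$ in $S_i$; the remaining $i$ vertices span the face in $Z_{i-1}$ and the induction proceeds. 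In particular $\sigma$ has a unique vertex $p$ in $Z_0$, so $\sigma$ is a top simplex through $p$, i.e.\ one of the cones $O_\varepsilon$; conversely each $O_\varepsilon$ is rainbow, since its vertices $p,\varepsilon_1\delta e_1,\ldots,\varepsilon_n\delta e_n$ lie in the distinct strata $S_0,S_1,\ldots,S_n$ and so receive distinct colors. Hence the rainbow $n$-simplices are precisely the $2^n$ cones $O_\varepsilon$ at each of the $\#Z_0$ points of $Z_0$, giving $2^n\cdot\#Z_0$ in total. I expect the main obstacle to be exactly this structural claim (together with, for the precise constant, the need to use the cross-polytope model near $Z_0$): any filtration-compatible triangulation would let the rest of the argument run, but only a carefully chosen local model makes the count come out to exactly $2^n$ per point of $Z_0$.
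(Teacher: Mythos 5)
Your overall strategy (triangulate with each $Z_i$ a subcomplex, color by components of strata, count rainbow simplices) is the same as the paper's, but the paper then passes to a barycentric subdivision before counting rainbow simplices, and your argument skips this step. That omission creates a genuine gap. The key false step is the repeated use of ``the vertices of $\tau$ all lie in $Z_i$, and $Z_i$ is a subcomplex, therefore $\tau\subseteq Z_i$.'' Being a subcomplex only guarantees that $\tau\cap Z_i$ is a union of faces of $\tau$; it does not force $\tau\subseteq Z_i$ when all vertices of $\tau$ lie in $Z_i$. The same unjustified step appears in your $\pi_1$-killing argument (``$e\subseteq Z_i$ since $Z_i$ is a subcomplex'').

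The consequence is that your structural claim --- every rainbow $n$-simplex has exactly one vertex in each stratum --- is false for the (non-subdivided) triangulation you build. For instance, with $n=2$, take $Z_1$ to be two disjoint small circles and $Z_0=\emptyset$ (each is inside a ball of radius $R$, so the filtration conditions can be met). Extend your local models to a triangulation of $M$ in which $Z_1$ is a subcomplex and some $2$-simplex $\sigma$ has one vertex on each circle and its third vertex off $Z_1$. The three vertices lie in three distinct color classes (two distinct components of $S_1$, one component of $S_2$), so $\sigma$ is rainbow, yet it has two vertices in $S_1$ and none in $S_0$; your count $2^n\cdot\#Z_0 = 0$ is then simply wrong. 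Your star-shapedness argument cannot rescue this because it needs $\tau\subseteq Z_i$ to conclude $\tau\setminus Z_{i-1}\subseteq S_i$, and it also slips in concluding from ``$\tau\not\subseteq Z_{i-1}$'' that ``$\tau$ has a \emph{vertex} outside $Z_{i-1}$,'' which again can fail. The paper avoids all of this by observing that in the filtration-compatible triangulation the relative interior of each simplex has a single color, and that a simplex and any of its faces are either the same color or at different levels; passing to the barycentric subdivision turns this into the statement about vertices that you want, so that rainbow simplices of the subdivision are exactly the full flags $\Delta_0<\Delta_1<\cdots<\Delta_n$ with $\mathrm{relint}(\Delta_j)\subseteq S_j$, and these are counted by the local normal form at $Z_0$. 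Your cross-polytope model near $Z_0$ is a nice concrete way to see the $2^n$ count, but the barycentric subdivision (or some substitute for it) is essential to rule out spurious rainbow simplices away from $Z_0$.
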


\begin{proof}
We want a triangulation of $M$ such that each $Z_i$ is a subcomplex of the triangulation.  To do this, we show by induction on $i$ that there exists a triangulation of $Z_i$ such that every $Z_{i'}$ for $i' < i$ is a subcomplex of the triangulation.  By the subpolyhedron property, there is a refinement of $Z_i$ that contains $Z_{i-1}$ as a subcomplex, and by the inductive hypothesis, there is a refinement of $Z_{i-1}$ that contains all lower levels $Z_{i'}$ as subcomplexes.  The resulting partition of $Z_i$ into faces of the refinements of $Z_{i-1}$ and of $Z_i \setminus Z_{i-1}$ forms a Whitney stratified space, so there is a smooth triangulation of $Z_i$ that refines this stratification.  (See~\cite{goresky88} for exposition on Whitney stratified spaces, and~\cite{goresky78} for how to construct a stratified triangulation.)  Continuing this process up through dimension $n$ we obtain the desired triangulation of $M$.

We color all the points in $M$, such that two points are the same color if and only if they are in the same level $Z_{i} \setminus Z_{i-1}$ (for $i = 0, \ldots, n$) and they are in the same connected component of $Z_i \setminus Z_{i-1}$.  Our initial triangulation of $M$ does not necessarily have the number of rainbow simplices that we want, but we claim that its barycentric subdivision does.  We observe that in the initial triangulation, the relative interior of each simplex has only one color, and that if one simplex is a face of another, the two simplices don't come from different components of $Z_i \setminus Z_{i-1}$ for the same $i$---either they are the same color, or they are at different levels of the filtration.  Thus, the corresponding property is true of the barycentric subdivision: if two vertices are in the same simplex, then either they are the same color, or they are at different levels.  This implies that if a simplex is rainbow, then all of its vertices are at different levels, and in an $n$-simplex, this means that among its $n+1$ vertices there must be exactly one at each level $Z_0, Z_1 \setminus Z_0, \ldots, Z_n \setminus Z_{n-1}$.

Recall that the smooth nesting condition implies that every point in $Z_0$ has a neighborhood in $M$ and a diffeomorphism from this neighborhood to $\mathbb{R}^n$ with the property that each level $Z_i$ gets sent to $\mathbb{R}^i \subseteq \mathbb{R}^n$. This implies that in the barycentric subdivision, there are exactly $2^n$ rainbow simplices containing each point of $Z_0$: from the point in $Z_0$, there are two directions in $Z_1$, corresponding to the two components of $\mathbb{R}^1 \setminus \mathbb{R}^0$; for each of these directions, there are two directions in $Z_2$, corresponding to the two components of $\mathbb{R}^2 \setminus \mathbb{R}^1$, and so on.  Each such chain of choices of directions corresponds to exactly one simplex.  Because the filtration is $R$-separating, each color (and all the edges among vertices of that color) is contained in a ball of radius $R$, which contributes nothing to $\pi_1(M)$, so the coloring is $\pi_1$-killing.
\end{proof}

\section{Large-systole case}\label{sec:baby}

The purpose of this section is to prove the following weaker version of Theorem~\ref{thm:big}, in preparation for proving the full version in the next section.  For any Riemannian manifold $M$, let $V_1(M)$ denote the supremal volume of a ball of radius $1$ in $M$.

\begin{theorem}\label{thm:baby}
Let $M$ be a closed, oriented, connected $n$-dimensional Riemannian manifold, and let $\Gamma = \pi_1(M)$.  Suppose that every homotopically nontrivial loop in $M$ has length greater than $2$.  Then
\[\Vert M \Vert_{\Delta} \leq 16^n (n!)^2 \cdot V_1(M) \cdot \Vol M.\]
Furthermore, if $V_1(M) < \frac{1}{n!}$, then the image of the fundamental homology class of $M$ under the classifying map is zero in $H_*(B\Gamma; \mathbb{Q})$, so $\Vert M \Vert_{\Delta} = 0$.
\end{theorem}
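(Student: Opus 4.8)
The plan is to feed a scale-$1$ separating filtration of $M$ into the machinery of Section~\ref{sec:background}, so the first thing to check is the hypothesis of Lemma~\ref{lem:triangulate} with $R=1$: that the inclusion of every ball of radius $1$ in $M$ induces the zero map on $\pi_1(M)$. This is the only place the large-systole assumption enters. Given a loop contained in some $B(p,1)$, subdivide it finely and join the subdivision points to $p$ by minimizing paths of length less than $1$; the loop becomes, after conjugation, a product of loops each of length less than $2+\eps$, where $\eps>0$ is as small as we like. Since $\sys(M)>2$, each such loop is null-homotopic in $M$, so $B(p,1)\hookrightarrow M$ is zero on $\pi_1$, as needed.

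Next I would construct a $1$-separating filtration $M=Z_n\supseteq Z_{n-1}\supseteq\cdots\supseteq Z_0$ by Papasoglu's area-minimizing method, in the Riemannian-polyhedron framework of Section~\ref{sec:background}: let $Z_{n-1}\subseteq M$ minimize $(n-1)$-dimensional area among $1$-separating hypersurfaces, and recursively let $Z_i$ minimize $i$-dimensional area among $1$-separating subpolyhedra of $Z_{i+1}$. Existence at each stage is a compactness statement in this framework, and one checks that the output is a filtration in the required sense, with smooth strata nested inside smooth strata, so that Lemma~\ref{lem:triangulate} applies. The heart of the argument --- and the step I expect to be the main obstacle --- is the estimate $\#Z_0\le 8^n(n!)^2\cdot V_1\cdot\Vol M$.

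I would prove this estimate by descending induction on dimension. At the top stage I would bound $\Area_{n-1}(Z_{n-1})$ in terms of $V_1$ and $\Vol M$ by comparing the minimizer to a competitor cut out along level sets of distance functions at scale $1$: a coarea inequality converts $\Vol M$ into the $(n-1)$-area of these sphere-like pieces, the complement of such a competitor decomposes into pieces lying inside balls of radius $1$ so that the competitor is admissible, and the volume bound $\Vol B(p,1)\le V_1$ is precisely what keeps the competitor's total area --- hence the minimizer's --- under control. Carrying out the same comparison inside the polyhedron $Z_{i+1}$ at each subsequent stage bounds $\Area_i(Z_i)$ by a factor of roughly $8(i+1)^2$ times $\Area_{i+1}(Z_{i+1})$, with no further factor of $V_1$; multiplying the $n$ inequalities gives $\#Z_0=\Area_0(Z_0)\le\bigl(\prod_{k=1}^{n} 8k^2\bigr)\cdot V_1\cdot\Vol M=8^n(n!)^2\cdot V_1\cdot\Vol M$. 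Ensuring that the competitors are genuinely admissible at each level, and tracking the constants honestly, is where the real effort lies; this is essentially Papasoglu's method made explicit.

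With this estimate in hand, Lemma~\ref{lem:triangulate} produces a triangulation of $M$ carrying a $\pi_1$-killing coloring with exactly $2^n\cdot\#Z_0$ rainbow simplices. The fundamental cycle of $M$ in this triangulation has all coefficients equal to $\pm1$, so Theorem~\ref{thm:rainbow} gives
\[\Vert M\Vert_\Delta\le 2^n\cdot\#Z_0\le 16^n(n!)^2\cdot V_1\cdot\Vol M.\]
For the final assertion, I would run the same comparison arguments with the sharper hypothesis $V_1<\tfrac1{n!}$ --- the threshold $\tfrac1{n!}$, the volume of a standard $n$-simplex, being where the relevant estimate collapses --- and show that they force $Z_0=\emptyset$; concretely, the minimizing $1$-dimensional stratum $Z_1$ then already has every connected component inside a ball of radius $1$, so that no $0$-dimensional stratum is needed. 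By the proof of Lemma~\ref{lem:triangulate} every rainbow simplex must contain a point of $Z_0$, so the resulting coloring has no rainbow simplices. Since $[M]$ is an integral class, Corollary~\ref{thm:rainbow-cor} then shows that the image of the fundamental class in $H_*(B\Gamma;\mathbb{Q})$ is zero, whence $\Vert M\Vert_\Delta=0$.
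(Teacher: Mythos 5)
Your architecture---verify that radius-$1$ balls are $\pi_1$-trivial (your systole argument for this is fine), build an area-minimizing $1$-separating filtration, feed $\#Z_0$ into Lemma~\ref{lem:triangulate}, Theorem~\ref{thm:rainbow}, and Corollary~\ref{thm:rainbow-cor}---matches the paper, and even your target $\#Z_0\le 8^n(n!)^2\,V_1\Vol M$ is exactly the bound the paper's argument yields. But the route you propose to that bound, a chain of \emph{global} stratum-by-stratum inequalities starting with $\Area_{n-1}(Z_{n-1})\lesssim V_1\cdot\Vol M$ and continuing with $\Area_i(Z_i)\le 8(i+1)^2\Area_{i+1}(Z_{i+1})$, does not work: the very first inequality is false. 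Take $M=S^1(L)\times S^{n-1}(\delta)$ with $L>2$ and $\delta$ small. Any $1$-separating hypersurface must cut the circle direction into pieces of diameter at most $2$, so $\Area_{n-1}(Z_{n-1})\gtrsim (L/2)\cdot c_n\delta^{n-1}\approx \tfrac12\Vol M$, whereas $V_1\cdot\Vol M\approx c\,\delta^{n-1}\cdot\Vol M\to 0\cdot\Vol M$; the ratio blows up as $\delta\to 0$. The underlying problem with your competitor argument is that the union of distance spheres over a $\tfrac12$-net has area controlled by $V_1$ times the \emph{number of net points}, and that number is not bounded by $\mathrm{const}(n)\cdot\Vol M$ (there is no lower bound on the volume of small balls), nor do the radius-$1$ balls around a net have dimensionally bounded multiplicity. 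The same obstruction recurs at every lower stage.

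This is precisely why the paper never bounds the total area of any stratum. Instead the minimality comparison is kept local: Lemma~\ref{lem:replacement} performs the cut-and-replace inside a single ball, giving for almost every $\rho$ the differential inequality $\Area_{d-1}(Z\cap B(p,\rho))\lesssim \frac{d}{d\rho}\Area_d(P\cap B(p,\rho))$ up to $\eps$, and Lemma~\ref{lem:volball} integrates this $n$ times \emph{within the one ball} to conclude that each point of $Z_0$ in $B(p,r_1)$ forces $\Vol B(p,r_2)\ge \#(Z_0\cap B(p,r_1))\cdot(r_2-r_1)^n/n!-\eps$. The global count of $Z_0$ then comes from a packing argument: a maximal family of \emph{disjoint} balls of radius $\tfrac14$ centered at $Z_0$, whose number is at most $\Vol M/((1/4)^n/n!-\eps)$ by the same local estimate, while the concentric balls of radius $\tfrac12$ cover $Z_0$ and each contains at most $2^n n!(V_1+\eps)$ points of it. Disjointness, not bounded multiplicity, is what keeps all constants dimensional. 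A secondary point: in the $V_1<\tfrac1{n!}$ case the paper does not show that the components of $Z_1$ are already small; it simply applies the local volume estimate at a hypothetical point of $Z_0$ with $r_1\to0$, $r_2\to1$ to get $\tfrac1{n!}\le V_1+\eps$, a contradiction, so $Z_0=\emptyset$. Your version of that step asserts the conclusion rather than deriving it. You should also note that exact minimizers need not exist in this polyhedral category; the paper works with $\eps_i$-almost-minimizers throughout and absorbs the errors.
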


The strategy is to select a $1$-separating filtration that is (nearly) area-minimizing at each level, and to bound the number of points in the $0$-dimensional level $Z_0$ in terms of $\Vol M$.  Then we can apply the statements from the previous section to relate the simplicial volume to the number of points in $Z_0$.  The next two lemmas show that every point in $Z_0$ is in a ball of fairly large volume.

\begin{lemma}\label{lem:replacement}
Let $M$ be a closed, oriented, connected $n$-dimensional Riemannian manifold, and let $R > 0$ be arbitrary.  Let $P$ be a pure $d$-dimensional Riemannian polyhedron embedded in $M$, and let $Z$ be an $R$-separating subpolyhedron of $P$.  Suppose that $\Area_{d-1}(Z)$ is within $\varepsilon$ of the infimal area of $R$-separating subpolyhedra of $P$.  Then for all $p \in M$ and all $r_1, r_2$ with $0 < r_1 < r_2 < R$ we have
\[\int_{r_1}^{r_2} \Area_{d-1} (Z \cap B(p, \rho))\ d\rho \leq \Area_d (P \cap B(p, r_2) \setminus B(p, r_1)) + 2\varepsilon R.\]
\end{lemma}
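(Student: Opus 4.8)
The plan is to use $Z$ as a competitor against itself: modify $Z$ by cutting out its intersection with a ball $B(p,\rho)$ and replacing that piece with a portion of the sphere $\partial B(p,\rho)$ intersected with $P$, together with the part of $Z$ already inside the smaller ball if needed. The resulting set should again be $R$-separating in $P$ (cutting along a sphere of radius $\rho < R$ only subdivides components into pieces of radius less than $R$), so by the almost-minimality hypothesis its $(d-1)$-area can only be smaller than $\Area_{d-1} Z$ by at most $\varepsilon$. Comparing areas, the piece of $Z$ we removed, namely $Z \cap B(p,\rho)$, has area at most the area of the replacement sphere-cap $P \cap \partial B(p,\rho)$, up to the $\varepsilon$ error. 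This is the familiar ``cut and paste against a sphere'' trick from the minimal hypersurface / Papasoglu circle of ideas.

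More precisely, first I would fix $p$ and set $f(\rho) = \Area_{d-1}(Z \cap B(p,\rho))$ and $g(\rho) = \Area_{d-1}(P \cap \partial B(p,\rho))$. The key pointwise inequality to establish is $f(\rho) \le g(\rho) + \varepsilon$ for (almost) every $\rho \in (0,R)$: this comes from the replacement competitor described above, using that $Z$ is $\varepsilon$-almost-minimizing. Next, by the coarea formula applied to the distance function $\rho \mapsto d(p,\cdot)$ on the $d$-dimensional polyhedron $P$ (which is $1$-Lipschitz, so the coarea inequality holds with constant $\le 1$), we get
\[
\int_{r_1}^{r_2} g(\rho)\, d\rho \;=\; \int_{r_1}^{r_2} \Area_{d-1}(P \cap \partial B(p,\rho))\, d\rho \;\le\; \Area_d\bigl(P \cap B(p,r_2) \setminus B(p,r_1)\bigr).
\]
Then integrating the pointwise bound $f(\rho) \le g(\rho) + \varepsilon$ over $\rho \in (r_1,r_2) \subseteq (0,R)$ and using $\int_{r_1}^{r_2} \varepsilon\, d\rho \le \varepsilon R$ gives $\int_{r_1}^{r_2} f(\rho)\, d\rho \le \Area_d(P \cap B(p,r_2)\setminus B(p,r_1)) + \varepsilon R$. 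This is not yet the claimed bound, which has $2\varepsilon R$, so I expect the replacement competitor actually requires cutting along \emph{two} spheres (radii $r_1$ and $r_2$, or a sphere at $\rho$ and discarding an inner annulus) in order to stay $R$-separating and to control the new boundary pieces, contributing a second $\varepsilon$; alternatively the factor $2$ absorbs the boundary term $f(r_1)$ or a slack in the almost-minimality comparison. I would sort out exactly which spheres to cut along so that the competitor is genuinely $R$-separating and the bookkeeping yields the stated constant.

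The main obstacle is the $R$-separation bookkeeping for the competitor: I must check that replacing $Z \cap B(p,\rho)$ by a spherical slice genuinely produces a subpolyhedron of $P$ (smooth faces in the right dimensions, as required in the definition) that is still $R$-separating, and handle the mild technical point that the distance sphere $\partial B(p,\rho)$ need not be a subpolyhedron for every $\rho$ — one resolves this by perturbing $\rho$ slightly (a.e. $\rho$ is a regular value in an appropriate sense on each simplex of $P$) or by working with the coarea formula on each maximal simplex separately and summing. Once the competitor is legitimate, the area comparison and coarea integration are routine, and the factor of $2$ on $\varepsilon R$ is exactly the slack that makes the argument robust to these choices.
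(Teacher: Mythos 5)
Your approach is the same as the paper's: cut along a sphere, use almost-minimality to bound $\Area_{d-1}(Z\cap B(p,\rho))$ by the area of the spherical replacement plus $\varepsilon$, then integrate with the coarea inequality. The one genuine gap is in the technical point you flag at the end but then understate. The distance function $d(p,\cdot)$ on a Riemannian manifold is only Lipschitz, not smooth---it fails to be differentiable on the cut locus of $p$, and the cut locus can intersect the simplices of $P$. So it is \emph{not} true that a.e.\ $\rho$ is a regular value of $d(p,\cdot)$ restricted to the simplices of $P$; the level sets $\partial B(p,\rho)$ need not be smooth hypersurfaces at all, let alone intersect $P$ in a legitimate subpolyhedron. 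Perturbing $\rho$ does not fix this because the function itself is not smooth. The paper's proof handles this by replacing $d(p,\cdot)$ with a smooth $(1+\delta)$-Lipschitz approximation $\widehat{d}$ agreeing with $d(p,\cdot)$ to within $\delta$; then Sard's theorem applies to $\widehat{d}$ on each simplex, the approximate spheres $\widehat{S}(p,\rho)$ are transverse to $P$ for a.e.\ $\rho$, and the replacement competitor is a genuine subpolyhedron.

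This smoothing is also the real source of the factor $2$ in $2\varepsilon R$, which your guesses (two spheres, or absorbing $f(r_1)$) do not capture. After smoothing, one integrates only over $\rho\in(r_1+\delta, r_2-\delta)$, the coarea inequality carries a $(1+\delta)$ factor, and the approximate balls $\widehat{B}(p,r)$ are sandwiched between true balls of radii $r\pm\delta$; passing back from $\widehat{B}$ to $B$ costs error terms of size $O(\delta)$ times the areas of $Z$ and $P$ inside $B(p,R)$. By taking $\delta$ small (depending on $\varepsilon$, $R$, and these areas) one arranges that all this extra error is at most an additional $\varepsilon R$, giving the stated $2\varepsilon R$. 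So your cleaner calculation yielding $\varepsilon R$ is not actually achievable by this method; the $2\varepsilon R$ is the honest cost of regularizing the distance function.
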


\begin{proof}
Given $Z, p, r_1, r_2$, we approximate the distance function on $M$ by a smooth function that is within a small distance $\delta$ of $M$ and is $(1+\delta)$-Lipschitz.  If $\widehat{B}(p, r)$ denotes the ball of radius $r$ around $p$ computed according to this approximate distance function, then $B(p, r-\delta) \subseteq \widehat{B}(p, r) \subseteq B(p, r+\delta)$.  We choose $\delta$ small enough that $r_2 + 2\delta < R$ and $3\delta \Area_{d-1}(Z \cap B(p, R)) + \delta \Area_d(P \cap B(p, R)) < \varepsilon R$.
%, it suffices to check the inequality
%\[\int_{r_1 + \delta}^{r_2 - \delta} \Area_{d-1} (Z \cap \widehat{B}(p, \rho))\ d\rho \leq \Area_d(P \cap \widehat{B}(p, r_2-\delta) \setminus \widehat{B}(p, r_1+\delta)) + \varepsilon R.\]
%For the remainder of the proof we drop the $\delta$ and pretend that the distance function is already smooth.

For almost all $\rho$, the function $\Area_d(P \cap \widehat{B}(p, \rho))$ is differentiable at $\rho$.  The coarea inequality (see~\cite[Theorem 13.4.2]{burago88}) says that because the approximate-distance-to-$p$ function on $P$ is $(1+\delta)$-Lipschitz with fibers $P \cap \widehat{S}(p, \rho)$, where $\widehat{S}(p, \rho)$ denotes $\partial \widehat{B}(p, \rho)$, we have
\[\Area_{d-1}(P \cap \widehat{S}(p, \rho)) \leq (1+\delta) \frac{d}{d\rho} \Area_d(P \cap \widehat{B}(p, \rho)).\]

%the approximate sphere $\widehat{S}(p, \rho) = \partial \widehat{B}(p, \rho)$ is transverse to $P$ and $Z$ (more precisely, $\rho$ is a regular value of the smooth approximate-distance-to-$p$ function on each simplex of $P$ and $Z$) and 

On the other hand, we claim that for almost all $\rho \in [0, r_2-\delta]$, replacing $Z \cap \widehat{B}(p, \rho)$ in $Z$ by $P \cap \widehat{S}(p, \rho)$ gives another $R$-separating subpolyhedron $Z'$ of $P$.  To see this, we consider every simplex of the refinement of $P$ that contains $Z$ as a subcomplex, and select $\rho$ to be a regular value of the smooth approximate-distance-to-$p$ function when restricted to all of these (finitely many) simplices.  Geometrically, this means that $\widehat{S}(p, \rho)$ is transverse to every simplex of the refinement of $P$.  We view $P$ as a Whitney stratified space in the following way: take the strata to be the faces of this refinement, and then subdivide them according to membership in $\widehat{S}(p, \rho)$.  The closure of each stratum is a manifold with corners, so the stratification satisfies the Whitney properties, and thus there is a smooth triangulation of $P$ that refines this stratification~\cite{goresky88, goresky78}.  Then $Z'$ is a subcomplex of this triangulation.  It satisfies the smooth nesting property because every original face of $P$ intersects $\widehat{S}(p, \rho)$ in a set of strictly lesser dimension.  The set $Z'$ is $R$-separating in $P$ because each connected component of $P \setminus Z'$ is contained either in $\widehat{B}(p, R)$ or in a connected component of $P \setminus Z$.

Thus, because $Z$ is area-minimizing up to $\varepsilon$ we have
\[\Area_{d-1}(Z \cap \widehat{B}(p, \rho)) \leq \Area_{d-1} (P \cap \widehat{S}(p, \rho)) + \varepsilon.\]
Integrating as $\rho$ varies between $r_1+\delta$ and $r_2-\delta$, and observing that $\varepsilon(r_2 - r_1-2\delta) \leq \varepsilon R$, gives
\[\int_{r_1 + \delta}^{r_2 - \delta} \Area_{d-1} (Z \cap \widehat{B}(p, \rho))\ d\rho \leq (1+\delta) \Area_d(P \cap \widehat{B}(p, r_2-\delta) \setminus \widehat{B}(p, r_1+\delta)) + \varepsilon R,\]
and so, plugging in our choice of $\delta$, we have
\[\int_{r_1}^{r_2} \Area_{d-1} (Z \cap B(p, \rho))\ d\rho \leq \Area_d (P \cap B(p, r_2) \setminus B(p, r_1)) + 2\varepsilon R.\]
\end{proof}

Applying the lemma above, along with repeated integration, then gives the following bound on the volumes of balls around points of $Z_0$.

\begin{lemma}\label{lem:volball}
Let $M$ be a closed $n$-dimensional Riemannian manifold.  For all $R > 0$ and all $\varepsilon > 0$, there exists an $R$-separating filtration
\[M = Z_n \supseteq Z_{n-1} \supseteq \cdots \supseteq Z_1 \supseteq Z_0,\]
such that for all $p \in M$ and all $r_1, r_2$ with $0 < r_1 < r_2 < R$ we have
\[\Area_0(Z_0 \cap B(p, r_1)) \cdot \frac{(r_2 - r_1)^n}{n!} \leq \Vol B(p, r_2) + \varepsilon.\]
\end{lemma}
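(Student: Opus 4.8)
The plan is to build the filtration from the top down, taking each level to be an approximate area minimizer among the $R$-separating subpolyhedra of the level above it, and then to derive the volume bound by integrating the inequality of Lemma~\ref{lem:replacement} once for each of the $n$ steps of the filtration.

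First I would construct the filtration. Put $Z_n = M$ and fix positive numbers $\varepsilon_0, \dots, \varepsilon_{n-1}$, to be pinned down at the end. Given $Z_{i+1}$, the infimum of $\Area_i$ over all $R$-separating subpolyhedra of $Z_{i+1}$ is finite (such a subpolyhedron exists, e.g.\ a suitably perturbed codimension-one skeleton of a fine enough subdivision of $Z_{i+1}$), so I may choose an $R$-separating subpolyhedron $Z_i$ of $Z_{i+1}$ whose $i$-dimensional area is within $\varepsilon_i$ of that infimum. A subpolyhedron is again a Riemannian polyhedron, so this can be iterated down to $Z_0$.

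Next I would run the integration. Fix $p \in M$ and set $f_i(\rho) = \Area_i(Z_i \cap B(p,\rho))$, so $f_n(\rho) = \Vol B(p,\rho)$ and $f_0$ is non-decreasing. Applying Lemma~\ref{lem:replacement} to the pair $Z_i \subseteq Z_{i+1}$ and bounding $\Area_{i+1}(Z_{i+1}\cap B(p,t)\setminus B(p,s))$ by $f_{i+1}(t)$ gives
\[\int_s^t f_i(\rho)\, d\rho \leq f_{i+1}(t) + 2\varepsilon_i R \qquad\text{for all }0 < s < t < R.\]
Now fix $0 < r_1 < r_2 < R$. Since $f_0$ is non-decreasing, $f_0(r_1)(t-r_1) \leq \int_{r_1}^t f_0$, so the $i=0$ case reads $f_0(r_1)(t - r_1) \leq f_1(t) + 2\varepsilon_0 R$ for $t \in (r_1, R)$. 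Integrating this in $t$ up to a new radius, then applying the $i=1$ inequality, integrating again, and repeating through $i = n-1$, I obtain
\[f_0(r_1)\cdot\frac{(r_2 - r_1)^n}{n!} \;\leq\; \Vol B(p, r_2) + \sum_{i=0}^{n-1} 2\varepsilon_i R\cdot\frac{(r_2 - r_1)^{\,n-1-i}}{(n-1-i)!}.\]
Because $r_2 - r_1 < R$, the error term is at most $C_{n,R}\max_i \varepsilon_i$ for a constant $C_{n,R}$ depending only on $n$ and $R$, and in particular it is independent of $p$, $r_1$, $r_2$. Choosing the $\varepsilon_i$ so that $C_{n,R}\max_i\varepsilon_i < \varepsilon$ then yields one filtration satisfying the conclusion for all $p$, $r_1$, $r_2$ at once.

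The step I expect to be the main obstacle is the iterated integration: confirming that the leading term indeed comes out as $(r_2-r_1)^n/n!$ and that every error term accumulated along the way stays uniformly bounded (which is where $0 < r_1 < r_2 < R$ is used, so that each intermediate upper limit of integration is a legal radius for Lemma~\ref{lem:replacement}). The remaining point, that the near-minimizers can be taken to be genuine subpolyhedra in the sense the filtration requires, is a routine general-position argument.
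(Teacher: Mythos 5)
Your proposal is correct and follows essentially the same strategy as the paper: build the filtration top-down by choosing each level to be an approximate area minimizer among $R$-separating subpolyhedra of the level above, then iterate the integration inequality of Lemma~\ref{lem:replacement} $n$ times to produce the factor $(r_2-r_1)^n/n!$, with the accumulated error bounded uniformly in $p, r_1, r_2$ by a quantity controlled by the $\varepsilon_i$. The paper packages the iterated integration as an explicit induction on $i$ with the error written as $2\varepsilon_0 R^i + \cdots + 2\varepsilon_{i-1}R$, but this is the same computation you carried out.
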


\begin{proof}
By induction on $i$ we can prove the following statement: if each $Z_i$ is area-minimizing up to $\varepsilon_i$, then for all $0 < r_1 < r_2 < R$, we have
\begin{align*}
\Area_0(Z_0 \cap B(p, r_1)) \cdot \frac{(r_2 - r_1)^i}{i!} \leq \Area_i(Z_i \cap B(p, r_2)) + \\
+ 2\varepsilon_0R^i + 2\varepsilon_1 R^{i-1} + \cdots + 2\varepsilon_{i-1}R.
\end{align*}
The base case $i = 0$ says $\Area_0(Z_0 \cap B(p, r_1)) \leq \Area_0(Z_0 \cap B(p, r_2))$, which is true.  The inductive step is obtained by replacing $r_2$ by $\rho$ in the inductive hypothesis, integrating as $\rho$ ranges from $r_1$ to $r_2$, and applying Lemma~\ref{lem:replacement} to the right-hand side of the inequality.

For $i = 0, \ldots, n-1$ we select $\displaystyle\varepsilon_i = \frac{\varepsilon}{2nR^{n-i}}$, so that
\[2\varepsilon_0R^{n} + 2\varepsilon_1 R^{n-1} + \cdots + 2\varepsilon_{n-1}R = \varepsilon.\]
Then plugging $i = n$ into the induction claim gives the desired inequality.
\end{proof}

With this lemma we can prove the special case of the main theorem.

\begin{proof}[Proof of Theorem~\ref{thm:baby}]
First we prove the statement about the case $V_1(M) < \frac{1}{n!}$.  In this case, we let $\varepsilon < \frac{1}{n!} - V_1(M)$, apply Lemma~\ref{lem:volball} with $R = 1$, and apply Lemma~\ref{lem:triangulate} to get a triangulation of $M$ with $2^n$ rainbow simplices for each point in $Z_0$.  For any point $p$ in $Z_0$, taking $r_1 \rightarrow 0$ and $r_2 \rightarrow 1$ in Lemma~\ref{lem:volball} gives a contradiction.  Thus there are no points in $Z_0$, and thus no rainbow simplices.  Corollary~\ref{thm:rainbow-cor} shows that the image of the fundamental class of $M$ is zero in $H_*(B\Gamma; \mathbb{Q})$.

In the case where $V_1(M) \geq \frac{1}{n!}$, for each $\varepsilon > 0$, we apply Lemma~\ref{lem:volball} with $R = 1$, and apply Lemma~\ref{lem:triangulate} to get a triangulation of $M$ with $2^n$ rainbow simplices for each point in $Z_0$.  Letting $V_1 = V_1(M)$ we take a maximal collection of disjoint balls of radius $\frac{1}{4}$, centered at points of $Z_0$.  The concentric balls of radius $\frac{1}{2}$ cover all of $Z_0$.  Let $B_1(\frac{1}{4}), \ldots, B_k(\frac{1}{4})$ denote the disjoint balls of radius $\frac{1}{4}$, and let $B_1(\frac{1}{2}), \ldots, B_k(\frac{1}{2})$ denote the $Z_0$-covering balls of radius $\frac{1}{2}$.  We apply Theorem~\ref{thm:rainbow}, and take the conclusion of Lemma~\ref{lem:volball} first with $r_1 = \frac{1}{2}$ and $r_2 \rightarrow 1$, and then with $r_1 \rightarrow 0$ and $r_2 = \frac{1}{4}$, to obtain
\[\Vert M \Vert_{\Delta} \leq 2^n \cdot \#(Z_0) \leq 2^n \cdot \sum_{i = 1}^k \#\left(Z_0 \cap B_i\left(\frac{1}{2}\right)\right) \leq  2^n \cdot \sum_{i = 1}^k n! \cdot 2^n \cdot (V_1 + \varepsilon) \leq \]
\[\leq 4^n \cdot n! \cdot (V_1 + \varepsilon) \cdot \sum_{i = 1}^k 1 \leq 4^n \cdot n! \cdot (V_1 + \varepsilon) \cdot \frac{\Vol M}{\left(\frac{(1/4)^n}{n!} - \varepsilon\right)},\]
and taking $\varepsilon \rightarrow 0$ gives
\[\Vert M \Vert_{\Delta} \leq 16^n (n!)^2 \cdot V_1 \cdot \Vol M.\]
\end{proof}

\section{Main proof}\label{sec:proof}

\subsection{The idea of Cantor bundles}
Before getting into the technical setup of the proof of Theorem~\ref{thm:big}, we begin with an informal overview of the idea of Cantor bundles from~\cite{braun21}, which was developed to extend the proof of Theorem~\ref{thm:baby} from the case where there may be short nontrivial loops but $\pi_1(M)$ is residually finite, to the case where $\pi_1(M)$ is not residually finite.  First we describe what happens when $\pi_1(M)$ is residually finite, and then we describe the Cantor bundle idea, which is closely analogous.

Let $\Gamma$ be the fundamental group of $M$, and suppose that $\Gamma$ is residually finite.  There are finitely many elements of $\Gamma$ with the property that their deck transformations on $\widetilde{M}$ move some points within distance $2$ of themselves.  By residual finiteness, there is a finite-index subgroup of $\Gamma$ that avoids all of these elements (except the identity), corresponding to a finite-sheeted covering space $\widehat{M}$ of $M$ without nontrivial loops of length at most $2$.  If $\widehat{M}$ has $k$ sheets over $M$, we can think of $\widehat{M} \rightarrow M$ as a locally trivial bundle with fiber $\{1, \ldots, k\}$, and we can also think of this bundle as the quotient of $\{1, \ldots, k\} \times \widetilde{M}$ by the action of $\Gamma$.  Here, the action of $\Gamma$ on $\{1, \ldots, k\}$ comes from identifying $\{1, \ldots, k\}$ with the set of cosets $\Gamma / \pi_1(\widehat{M})$.

Instead of looking for a $1$-separating filtration of $M$, we do the same thing on $\widehat{M}$, or equivalently, we do it equivariantly on $\{1, \ldots, k\} \times \widetilde{M}$.  Then, to estimate the simplicial volume of $M$, instead of triangulating $M$ we triangulate $\widehat{M}$ (or triangulate $\{1, \ldots, k\} \times \widetilde{M}$ equivariantly), multiply each simplex by $\frac{1}{k}$, and project to $M$ to get a fundamental cycle for $M$.

To adapt this method to the case where $\Gamma$ is not residually finite, Braun and Sauer replace the finite set $\{1, \ldots, k\}$ by a Cantor set $X$.  The Cantor set admits a free, continuous action of $\Gamma$, as shown by~\cite{hjorth06}.  Thus, our work to find a $1$-separating filtration, and then a triangulation, is done $\Gamma$-equivariantly on $X \times \widetilde{M}$, or equivalently is done on the quotient $X \times_{\Gamma} \widetilde{M}$, which is a locally trivial bundle over $M$ with fiber equal to $X$.

Informally, even though this cover of $M$ or $\widetilde{M}$ now has uncountably many sheets, over any compact subset of $\widetilde{M}$, we still want only finitely many different kinds of sheets.  To guarantee this property, we introduce the following definitions.  We say that a \textbf{\textit{thick set}} is a subset of $X \times \widetilde{M}$ of the form $A \times S$, where $A \subseteq X$ is clopen and $S \subseteq \widetilde{M}$ is bounded.  We say that a thick set has a \textbf{\textit{non-self-intersecting orbit}} if it does not intersect any of its other $\Gamma$-translates.  When a thick set has a non-self-intersecting orbit, we call the union of the thick set and all its translates a \textbf{\textit{thick orbit}}.  We verify in the following proposition that any collection of finitely many thick orbits has the property that there are only finitely many kinds of sheets over any compact subset.

\begin{proposition}\label{prop:finiteness}
Let $M$ be a closed, oriented, connected $n$-dimensional Riemannian manifold.  Let $\Gamma = \pi_1(M)$, and let $\widetilde{M}$ be the universal cover of $M$.  Let $X$ denote the Cantor set, equipped with a free, continuous action of $\Gamma$, and consider the diagonal action of $\Gamma$ on $X \times \widetilde{M}$.  Let $\Gamma(A_1 \times S_1), \ldots, \Gamma(A_r \times S_r)$ be any collection of finitely many thick orbits in $X \times \widetilde{M}$ under the action of $\Gamma$.  Then for every $R > 0$ and every ball $B(\widetilde{p}, R)$ in $\widetilde{M}$, there exists a partition of $X$ into finitely many clopen sets $X_1, \ldots, X_k$, such that within each set $X_j \times B(\widetilde{p}, R)$, every sheet $\{x\} \times B(\widetilde{p}, R)$ has the same pattern of thick orbits.  That is, for every $j \in \{1, \ldots, k\}$, every $x, y \in X_j$, every $\widetilde{q} \in B(\widetilde{p}, R)$, and every $i \in \{1, \ldots, r\}$, we have $(x, \widetilde{q}) \in \Gamma(A_i \times S_i)$ if and only if $(y, \widetilde{q}) \in \Gamma(A_i \times S_i)$.
\end{proposition}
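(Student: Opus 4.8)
The plan is to reduce the statement to a finiteness property of the $\Gamma$-action on $X$, restricted to a compact set of group elements. First I would fix the ball $B(\widetilde p, R)$ and observe that a point $(x,\widetilde q)$ with $\widetilde q \in B(\widetilde p, R)$ lies in the thick orbit $\Gamma(A_j \times S_j)$ if and only if there is some $\gamma \in \Gamma$ with $\widetilde q \in \gamma S_j$ and $x \in \gamma A_j$. Since each $S_j$ is bounded and $B(\widetilde p, R)$ is bounded, the condition $\widetilde q \in \gamma S_j$ for some $\widetilde q \in B(\widetilde p, R)$ forces $\gamma$ to lie in a \emph{finite} subset $F_j \subseteq \Gamma$: the set of $\gamma$ such that $\gamma S_j$ meets $B(\widetilde p, R)$, which is finite because the $\Gamma$-action on $\widetilde M$ is properly discontinuous and $S_j$, $B(\widetilde p, R)$ are both contained in compact sets. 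Let $F = \bigcup_{j=1}^r F_j$, a finite subset of $\Gamma$.

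Next I would encode the "pattern of thick orbits" of a sheet $\{x\} \times B(\widetilde p, R)$ by the finite amount of combinatorial data: for each $j \in \{1,\dots,r\}$ and each $\gamma \in F_j$, record whether $x \in \gamma A_j$. Because each $A_j \subseteq X$ is clopen and the $\Gamma$-action on $X$ is by homeomorphisms, each translate $\gamma A_j$ is clopen, so the function $x \mapsto \mathbf{1}_{\gamma A_j}(x)$ is locally constant. The key point is that whether $(x,\widetilde q) \in \Gamma(A_j \times S_j)$ for a given $\widetilde q \in B(\widetilde p, R)$ is determined by this finite tuple of clopen-membership bits: indeed $(x,\widetilde q) \in \Gamma(A_j \times S_j)$ iff $x \in \gamma A_j$ for some $\gamma \in \Gamma$ with $\widetilde q \in \gamma S_j$, and every such $\gamma$ lies in $F_j$. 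Therefore, if $x$ and $y$ have the same tuple of bits $\bigl(\mathbf{1}_{\gamma A_j}(x)\bigr)_{j, \gamma \in F_j} = \bigl(\mathbf{1}_{\gamma A_j}(y)\bigr)_{j, \gamma \in F_j}$, then for every $\widetilde q \in B(\widetilde p, R)$ and every $j$ we have $(x,\widetilde q) \in \Gamma(A_j \times S_j) \iff (y,\widetilde q) \in \Gamma(A_j \times S_j)$.

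Finally I would define the partition: the map $\Phi \co X \to \{0,1\}^{F}$ (indexed appropriately by pairs $(j,\gamma)$ with $\gamma \in F_j$) sending $x$ to its tuple of bits is locally constant, hence continuous, hence its fibers $X_1, \dots, X_k$ are clopen, finite in number since the target is finite. By the previous paragraph these fibers have exactly the required property. The only genuinely substantive point is the finiteness of $F_j$, which rests on proper discontinuity of the $\Gamma$-action on $\widetilde M$ together with boundedness of $S_j$ and $B(\widetilde p, R)$; everything else is bookkeeping about clopen sets. I expect that finiteness step to be the main thing to state carefully (one should note $M$ is compact, so $\widetilde M$ has a properly discontinuous cocompact action, and a bounded set is contained in a compact one), but it is routine. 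The rest is purely formal.
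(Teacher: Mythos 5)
Your proposal is correct and is essentially the paper's own argument: the paper also observes that only finitely many translates $\gamma A_j$ (those with $\gamma S_j$ meeting the ball) are relevant, and partitions $X$ into the atoms of the Boolean algebra they generate, which is exactly the partition into fibers of your indicator-tuple map $\Phi$. No substantive difference.
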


\begin{proof}
Consider all sets $\gamma A_i$, where $\gamma \in \Gamma$ and $A_i \times S_i$ is a thick set in our collection such that $\gamma S_i$ intersects $B(\widetilde{p}, R)$.  There are only finitely many such sets, and they generate an algebra in $X$ under union, intersection, and complement.  There are finitely many minimal sets in this algebra, all of which are clopen, and we choose $X_1, \ldots, X_k$ to be these sets.  Thus, if $x$ and $y$ are in the same one of these sets, then $x$ and $y$ are in exactly the same sets $\gamma A_i$, so for any $\widetilde{q} \in B(\widetilde{p}, R)$, the points $(x, \widetilde{q})$ and $(y, \widetilde{q})$ are in exactly the same thick orbits.
\end{proof}

In addition to the action of $\Gamma$ on $X$, as in~\cite{braun21} we need $X$ to be equipped with a $\Gamma$-invariant probability measure, $\mu$; \cite{hjorth06} shows that there exists a free, continuous action of $\Gamma$ on $X$ that admits such a $\mu$.  This means that if $\Gamma(A \times S)$ is a thick orbit, and $S$ is $d$-dimensional, we can define (with some abuse of notation)
\[\Area_d(\Gamma(A \times S)) = \Area_d(A \times S) = \mu(A) \cdot \Area_d(S).\]
This allows us to select our $1$-separating filtrations to be (nearly) area-minimizing at each level $\widetilde{Z}_i$, as we did when working directly with $M$.  It also allows us to turn a $1$-separating filtration into a fundamental cycle for $M$, by triangulating the sheets in a consistent way and projecting to $M$, with coefficients given by the measures of the relevant sets in $X$.  We record all of the Cantor bundle setup in the following definition.

\begin{definition}\label{def}
Henceforth, $M$ is a closed, oriented, connected $n$-dimensional Riemannian manifold.  $\Gamma$ is the fundamental group of $M$, and $\widetilde{M}$ is the universal cover of $M$.  We consider a free, continuous action of $\Gamma$ on the Cantor set $X$, with a $\Gamma$-invariant probability measure $\mu$ on $X$.  Then $\Gamma$ acts on $X \times \widetilde{M}$ by the corresponding diagonal action.
\end{definition}

\subsection{Adapting the proof to Cantor bundles}

We define a $\Gamma$-equivariant \textbf{\textit{thick triangulation}} $\widetilde{Q}$ of $X \times \widetilde{M}$ to consist of the following data:
\begin{itemize}
\item A pure $n$-dimensional polyhedron $Q$ (with finitely many faces) with a continuous map $\varphi \co Q \rightarrow M$; and
\item For each face $F$ of $Q$, a $\Gamma$-equivariant choice of clopen subset $A_F \subseteq X$ for each lift of $\varphi\vert_{F}$ to $\widetilde{M}$.
\end{itemize}
It is required to satisfy the following constraints:
\begin{itemize}
\item As $F$ ranges over the (relatively open) faces of $Q$, the $\Gamma$-orbits of the various thick faces $A_F \times \widetilde{\varphi\vert_{F}}(F)$ are non-self-intersecting and form a partition of $X \times \widetilde{M}$;
\item For every $x \in X$, the leaf $x \times \widetilde{M}$ is smoothly triangulated by its intersections with the various thick faces; and
\item If $F$ is a face in the closure of a face $E$, for any lift of $\varphi\vert_{\overline{E}}$ to $\widetilde{M}$, the clopen subsets $A_E$ and $A_F$ corresponding to the restrictions of this lift to $E$ and $F$ satisfy $A_E \subseteq A_F$.
\end{itemize}
It is important that the last inclusion of sets is not required to be an equality; otherwise, thick triangulations would be no more interesting than triangulations of disjoint finite covering spaces of $M$.  A \textbf{\textit{thick subcomplex}} of $\widetilde{Q}$ is the union of orbits of the thick faces of a chosen subcomplex of $Q$.  For a thick subcomplex $\widetilde{P}$ to be \textbf{\textit{pure $d$-dimensional}}, we require not only that the corresponding subcomplex $P$ of $Q$ be pure $d$-dimensional, but also that every point in each thick orbit of $\widetilde{P}$ be in the closure of a $d$-dimensional thick face---that is, of a thick face corresponding to a $d$-dimensional face of $Q$.  We define a $\Gamma$-equivariant \textit{\textbf{thick $d$-polyhedron}} $\widetilde{P}$ in $X \times \widetilde{M}$ to be any pure $d$-dimensional thick subcomplex of a thick triangulation of $X \times \widetilde{M}$.  

A \textbf{\textit{refinement}} of a thick polyhedron $\widetilde{P}$ is a thick polyhedron $\widetilde{P'}$, such that the closure of every orbit of a thick face of $\widetilde{P}$ is a thick subcomplex of $\widetilde{P'}$.  A \textit{\textbf{codimension $1$ thick subpolyhedron}} of $\widetilde{P}$ is a thick $(d-1)$-polyhedron $\widetilde{Z}$ in $X \times \widetilde{M}$ that is a thick subcomplex of some refinement of $\widetilde{P}$, satisfying the \textbf{\textit{smooth nesting condition}}: the relative interior of every thick face of $\widetilde{Z}$ is contained in the relative interior of a thick face of $\widetilde{P}$ of strictly greater dimension.

Given $R > 0$, we say that a codimension $1$ thick subpolyhedron $\widetilde{Z}$ of $\widetilde{P}$ is \textbf{\textit{$R$-separating}} in $\widetilde{P}$ if there is some refinement of $\widetilde{P}$ that contains a refinement of $\widetilde{Z}$ as a thick subcomplex and satisfies the following property.  Abusing notation, we let $P$ denote the polyhedron corresponding to the chosen refinement of $\widetilde{P}$, and let $Z$ denote its subcomplex corresponding to the chosen refinement of $\widetilde{Z}$.  We require that for each connected component of $P \setminus Z$, the restriction of $\varphi \co P \rightarrow M$ to this connected component lifts to $\widetilde{M}$, and its image is contained in a ball of radius $R$ in $\widetilde{M}$.  This condition is almost the same as the formally weaker condition that every connected component of $\widetilde{P} \setminus \widetilde{Z}$ be contained in a ball of radius $R$ in some sheet $\{x\} \times \widetilde{M}$, but the proof of Lemma~\ref{lem:triangulate-cantor} uses the stronger condition.  We define a $\Gamma$-equivariant \textbf{\textit{thick $R$-separating filtration}} of $X \times \widetilde{M}$ to consist of nested thick polyhedra
\[X \times \widetilde{M} = \widetilde{Z}_n \supseteq \widetilde{Z}_{n-1} \supseteq \cdots \supseteq \widetilde{Z}_1 \supseteq \widetilde{Z}_0\]
such that each $\widetilde{Z}_i$ is an $R$-separating codimension $1$ thick subpolyhedron of $\widetilde{Z}_{i+1}$.  The analogue of Lemma~\ref{lem:triangulate} is the following.

\begin{lemma}\label{lem:triangulate-cantor}
Let $M$, $\Gamma$, and $X$ be as in Definition~\ref{def}.  Consider a $\Gamma$-invariant thick $R$-separating filtration
\[X \times \widetilde{M} = \widetilde{Z}_n \supseteq \widetilde{Z}_{n-1} \supseteq \cdots \supseteq \widetilde{Z}_1 \supseteq \widetilde{Z}_0.\]
There is a fundamental cycle $z = \sum_{i} a_i \sigma_i$ for $M$ with coefficients $a_i \in \mathbb{R}$, with a $\pi_1$-killing vertex coloring, such that
\[\sum_{\mathrm{rainbow}\ \sigma_i} \abs{a_i} = 2^n \cdot \Area_0\left(\widetilde{Z}_0\right).\]
\end{lemma}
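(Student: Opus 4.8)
The plan is to adapt the proof of Lemma~\ref{lem:triangulate} to the Cantor bundle setting, carrying out the triangulation $\Gamma$-equivariantly on $X \times \widetilde{M}$ and then pushing the result down to $M$ with coefficients coming from the measure $\mu$ on $X$. First I would use Proposition~\ref{prop:finiteness}: since each $\widetilde{Z}_i$ is a finite union of thick orbits, over any ball $B(\widetilde{p}, R)$ in $\widetilde{M}$ there is a finite clopen partition $X_1, \dots, X_k$ of $X$ so that every sheet within a fixed $X_\ell \times B(\widetilde{p}, R)$ has the same pattern of thick orbits from the filtration. Choosing finitely many such balls to cover a fundamental domain for the $\Gamma$-action on $\widetilde{M}$, and taking the common refinement of the resulting partitions, I get a single finite clopen partition $X = X_1 \sqcup \cdots \sqcup X_k$ such that the filtration restricted to each $X_\ell \times (\text{fundamental domain})$ ``looks locally like'' a genuine finite cover of $M$. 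The point of this step is that it reduces the construction to a problem that is combinatorially identical to the finite-sheeted case treated informally in Section~\ref{sec:proof}.

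Next I would triangulate $\widetilde{M}$ $\Gamma$-equivariantly so that each $\widetilde{Z}_i$ (that is, the $\widetilde{M}$-factor simplices of the thick orbits making up $\widetilde{Z}_i$) is a subcomplex, exactly as in Lemma~\ref{lem:triangulate}: start from the $0$-dimensional thick orbits of $\widetilde{Z}_0$ and build up one dimension at a time, subdividing simplices of $\widetilde{Z}_{i+1}$ that are cut by $\widetilde{Z}_i$, then pass to the barycentric subdivision. Because the $X$-labels are locally constant on each clopen piece $X_\ell$, this triangulation, together with the clopen partition, yields a triangulation of the bundle $X \times_\Gamma \widetilde{M}$ over $M$: each simplex of $M$'s triangulation, thickened, carries a decomposition of $X$ into finitely many clopen sets, and I assign to each such piece the coefficient $\mu(\text{clopen set})$. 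Summing these over the sheets gives a chain $z = \sum_i a_i \sigma_i$ on $M$; it is a fundamental cycle because on each sheet the triangulation is a genuine triangulation of (a copy of) $M$ inducing the fundamental class, and the coefficients $\mu(\cdot)$ add up to $\mu(X) = 1$ consistently across faces by $\Gamma$-invariance of $\mu$.

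For the coloring, I color a vertex of $z$ by the pair (level $Z_i \setminus Z_{i-1}$ in which it sits, connected component of that level in $X \times \widetilde{M}$), just as before, and push this coloring down to $M$. The same face-incidence argument from Lemma~\ref{lem:triangulate} shows that after barycentric subdivision a rainbow simplex must have exactly one vertex at each level $0, 1, \dots, n$, and that the rainbow simplices sitting over a given $0$-stratum point of $\widetilde{Z}_0$ number exactly $2^n$ (two directions in each of $\widetilde{Z}_1, \dots, \widetilde{Z}_n$). Summing $|a_i|$ over rainbow simplices thus gives $2^n$ times the total measure-weighted count of $0$-strata points, which is precisely $2^n \cdot \Area_0(\widetilde{Z}_0) = 2^n \cdot \mu(A) \cdot \#(\text{points in the defining simplex})$ summed over the thick orbits comprising $\widetilde{Z}_0$. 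Finally, the coloring is $\pi_1$-killing: each color class, being contained in a component of some $\widetilde{Z}_i \setminus \widetilde{Z}_{i-1}$, lies in a radius-$R$ ball of a single sheet, and when projected to $M$ this lands in a radius-$R$ ball of $M$, whose inclusion kills $\pi_1$ by hypothesis.

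The main obstacle I expect is bookkeeping: making precise that the equivariant triangulation of $X \times \widetilde{M}$, which a priori involves an uncountable set of sheets, really descends to a \emph{finite} singular cycle on $M$ with well-defined rational coefficients, and that faces match up so that $z$ is genuinely a cycle representing $[M]$. This is exactly where Proposition~\ref{prop:finiteness} does the heavy lifting — it guarantees the ``finitely many kinds of sheets'' property locally — but stitching the local pictures into one global finite cycle, and checking that $\partial z = 0$ with the $\mu$-weighted coefficients, requires care. The geometric content (the $2^n$ rainbow simplices per $0$-stratum point, the $\pi_1$-killing property) is a direct transcription of Lemma~\ref{lem:triangulate} and should go through verbatim once the bundle-theoretic setup is in place.
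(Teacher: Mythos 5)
Your overall blueprint matches the paper's: use Proposition~\ref{prop:finiteness} to control the local combinatorics, triangulate $X \times \widetilde{M}$ equivariantly so each $\widetilde{Z}_i$ is a subcomplex, barycentrically subdivide, color by level and component, and project to $M$ with $\mu$-weighted coefficients to produce a finite cycle with $2^n \cdot \Area_0(\widetilde{Z}_0)$ rainbow mass. That part is essentially the paper's proof, and your bookkeeping concern about descending to a genuine finite cycle is legitimate and is exactly what Proposition~\ref{prop:finiteness} resolves.

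However, your justification of the $\pi_1$-killing property is wrong in a way that defeats the purpose of the lemma. You write that each color class ``when projected to $M$ \ldots lands in a radius-$R$ ball of $M$, whose inclusion kills $\pi_1$ by hypothesis.'' Lemma~\ref{lem:triangulate-cantor} has no such hypothesis: unlike Lemma~\ref{lem:triangulate}, its statement makes no assumption that balls of radius $R$ in $M$ are $\pi_1$-killing. Indeed, in the setting of Theorem~\ref{thm:big} there can be arbitrarily short essential loops in $M$, and if we had such a hypothesis available we would not need the Cantor bundle at all --- Lemma~\ref{lem:triangulate} would already suffice, as it does in the large-systole Theorem~\ref{thm:baby}. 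The whole point of working equivariantly on $X \times \widetilde{M}$ is to make this hypothesis unnecessary. The correct argument is that each color class lies in a single sheet $\{x\} \times \widetilde{M}$ (rather, in the set of sheets $\bigcup_y \{y\} \times S$ with a fixed shadow $S \subseteq B(\widetilde{p}, R) \subseteq \widetilde{M}$), so the $1$-complex of same-color edges in $z$ is the image under the covering map $\pi\colon\widetilde{M}\to M$ of a $1$-complex in the simply connected space $\widetilde{M}$; the map therefore factors through $\pi_1(\widetilde{M}) = 1$ and is zero on $\pi_1$, with no hypothesis on $M$ required. This is the one place where the Cantor bundle argument genuinely differs from Lemma~\ref{lem:triangulate}, and your proposal elides it by transcribing the wrong step verbatim.

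One smaller omission: the paper also notes that the thick triangulation must be subdivided in the $X$ direction so that the relative interior of each thick simplex is monochromatic; your remark that the $X$-labels are locally constant on the clopen pieces gestures at this but does not quite carry it out, since the components of a level $\widetilde{Z}_i \setminus \widetilde{Z}_{i-1}$ can cut a thick simplex into pieces in the $X$ direction beyond the initial partition from Proposition~\ref{prop:finiteness}.
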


\begin{proof}
Similar to Lemma~\ref{lem:triangulate}, we want a thick triangulation of $X \times \widetilde{M}$ such that (a refinement of) each $\widetilde{Z}_i$ is a thick subcomplex of the thick triangulation.  We say that a refinement is \textbf{\textit{vertical}} if it subdivides thick faces in the $X$ coordinate only, and \textbf{\textit{horizontal}} if it subdivides thick faces in the $\widetilde{M}$ coordinate only.  Our plan is to take vertical refinements first, until the polyhedra $Z_i$ corresponding to the various $\widetilde{Z}_i$ are nested; then, simultaneously triangulating the nested Riemannian polyhedra
\[Z_n \supseteq Z_{n-1} \supseteq \cdots \supseteq Z_1 \supseteq Z_0\]
as in Lemma~\ref{lem:triangulate}, we obtain our desired thick triangulation as a horizontal refinement of the vertical refinement.

Abusing notation, for each face $F$ of any polyhedron $P \in \{Z_i\}_{i=1}^n$, with corresponding map $\varphi \co P \rightarrow M$, we denote one lift $\widetilde{\varphi\vert_F}(F)$ by $F$, so that $F$ is itself a subset of $\widetilde{M}$.  Then the orbit of the thick face $A_F \times F$ consists of the thick faces $A_{\gamma F} \times \gamma F$ as $\gamma$ ranges over $\Gamma$, and $A_{\gamma F} = \gamma A_F$.  We first claim that there are vertical refinements of all $\widetilde{Z}_i$ such that every pair $\widetilde{Z}_i \subseteq \widetilde{Z}_{i+1}$ satisfies the \textbf{\textit{tight nesting condition}}: not only is every thick face $A_{\gamma F} \times \gamma F$ of $\widetilde{Z}_i$ contained in a thick face $A_{\gamma' E} \times \gamma' E$ of $\widetilde{Z}_{i+1}$ with $\dim E > \dim F$, but we also have $A_{\gamma' E} = A_{\gamma F}$.  To prove this, we prove by induction on $k$ that these simultaneous vertical refinements can be done on the intersections of the various $\widetilde{Z}_i$ with the $k$-skeleton of $\widetilde{Z}_n$.

Let $\widetilde{Q}^{(k)}$ denote the $k$-skeleton of $\widetilde{Z}_n$, and let $Q^{(k)}$ denote the corresponding polyhedron.  Let $E$ be a $k$-dimensional face of $Q^{(k)}$.  By the inductive hypothesis we assume that we have already found vertical refinements of $\widetilde{Q}^{(k-1)} \cap \widetilde{Z}_i$ for all $i$, that satisfy the tight nesting condition.  Our goal is, for each thick face $A_{\gamma F} \times \gamma F$ (possibly equal to $A_E \times E$) of any $\widetilde{Z}_i$ such that $A_{\gamma F} \times \gamma F \subseteq A_E \times E$, to subdivide $A_F$ (and, equivariantly, its translations) in a way that satisfies the tight nesting condition.

Consider $A_E \times \overline{E}$.  Similar to Proposition~\ref{prop:finiteness}, we partition $A_E$ into sets $A_1, \ldots, A_r$, with the informal idea that for any two elements $x, y$ of the same set $A_j$, the leaves $x \times \overline{E}$ and $y \times \overline{E}$ should look identical in terms of the existing stratifications of $\widetilde{Q}^{(k-1)}$ and the various sets $\widetilde{Z}_i$.  More precisely, we let $A_1, \ldots, A_r$ be the minimal nonempty subsets of the algebra generated by the following subsets of $A_E$ under union, intersection, and complement:
\begin{itemize}
\item For each thick face $A_{\gamma F} \times \gamma F$ of $\widetilde{Q}^{(k-1)}$ that intersects the relative boundary $A_E \times \partial E$, we take $A_{\gamma F} \cap A_E$; and
\item For each thick face $A_{\gamma F} \times \gamma F$ of any $\widetilde{Z}_i$ such that $A_{\gamma F} \times \gamma F \subseteq A_E \times E$, we take $A_{\gamma F}$.
\end{itemize}

There are finitely many such thick faces $A_{\gamma F} \times \gamma F$, because each of $\widetilde{Q}^{(k-1)}$ and $\widetilde{Z}_i$ consist of finitely many orbits of thick faces, and only finitely many thick faces of each orbit intersect the bounded set $A_E \times \overline{E}$.  The minimal subsets of the algebra are obtained by taking intersections of generators of the algebra and their complements; thus, the number $r$ of minimal nonempty subsets of the algebra is at most $2$ to the number of generators, which in particular is finite.  Note also that the sets $A_1, \ldots, A_r$ partition $A_E$.

In our vertical refinement of $\widetilde{Q}^{(k)}$, we subdivide $A_E \times E$ into $A_1 \times E, \ldots, A_r \times E$; in $Q^{(k)}$, this replaces $E$ by $r$ identical copies of $E$.  We have chosen the sets $A_1, \ldots, A_r$ to be small enough that if $A_{\gamma F} \times {\gamma F}$ is any face of $\widetilde{Q}^{(k-1)}$ touching the boundary of some $A_j \times E$, then $A_j \subseteq A_{\gamma F}$.  Thus, after doing this for all $k$-faces $E$, the resulting vertical refinement $\widetilde{Q}^{(k)}$ satisfies the conditions needed to be a thick polyhedron.

In our vertical refinement of each $\widetilde{Q}^{(k)} \cap \widetilde{Z}_i$, whenever a thick face $A_{\gamma F} \times \gamma F$ in the orbit of $A_F \times F$ satisfies $A_{\gamma F} \times \gamma F \subseteq A_E \times E$, we subdivide $A_F \times F$ in the following way.  We find the sets $A_j$ that satisfy $A_j \subseteq A_{\gamma F}$, and then translate to get $\gamma^{-1}A_j \subseteq A_F$, so that the various sets $\gamma^{-1}A_j \times F$ subdivide $A_F \times F$.  We have chosen $A_1, \ldots, A_r$ such that $A_{\gamma F}$ is indeed the union of some collection of these sets $A_j$, and so $A_F$ is indeed the union of some collection of $\gamma^{-1}A_1, \ldots, \gamma^{-1}A_r$.  This refinement of each thick face $A_F \times F$ of $\widetilde{Q}^{(k)} \cap \widetilde{Z}_i$ guarantees that the thick polyhedra $\widetilde{Q}^{(k)} \cap \widetilde{Z}_i$ satisfy the tight nesting condition.  This completes the induction, so we may assume that the levels $\widetilde{Z}_i$ satisfy the tight nesting condition.

The smooth nesting condition guarantees that each face of $Z_i$ maps into a face of $Z_{i+1}$; furthermore, the tight nesting condition guarantees that the resulting map embeds $Z_i$ into $Z_{i+1}$ injectively.  Without first doing a vertical refinement, we would still have smooth maps from $Z_i$ into $Z_{i+1}$, but they would be immersions rather than embeddings, because disjoint thick faces of $\widetilde{Z}_i$ could be contained in the same thick face of $\widetilde{Z}_{i+1}$ in such a way that their projections cross.  Thus we have nested polyhedra
\[Z_n \supseteq Z_{n-1} \supseteq \cdots \supseteq Z_1 \supseteq Z_0,\]
where each pair $Z_i$ and $Z_{i+1}$ satisfy the smooth nesting condition.

The proof of Lemma~\ref{lem:triangulate} then guarantees that $Z_n$ can be triangulated such that each $Z_i$ is a subcomplex of the triangulation.  This results in a horizontal refinement of $\widetilde{Z}_n$, in which every $\widetilde{Z}_i$ is a thick subcomplex.

We construct our fundamental cycle $z$ as the barycentric subdivision of the cycle $z'$, constructed as follows.  For each $n$-dimensional simplex $F$ of $Z_n$, we have a corresponding singular simplex $\varphi\vert_F$ in $M$, which is immersed.  Using a fixed orientation of $M$, and an arbitrary total ordering of the vertices of $Z_n$, the immersion is either orientation-preserving or orientation-reversing on $F$.  We let the coefficient of the singular simplex $\varphi\vert_F$ in $z'$ be $\mu(A_F)$ if the immersion is orientation-preversing, or $-\mu(A_F)$ if it is orientation-reversing.

To obtain a $\pi_1$-killing vertex coloring of the barycentric subdivision $z$ of $z'$, we color $Z_n$ by dividing it first into levels $Z_i \setminus Z_{i-1}$ for $i = 0, \ldots, n$, and then dividing each level into connected components, and each of these connected components corresponds to one color.  In this way, the relative interior of each simplex of $Z_n$ is only one color.  Thus, each vertex of $z$, as the barycenter of some simplex of $Z_n$, is colored by the color of that simplex.  The $R$-separating property guarantees that any loop within a single color has null-homotopic image in $M$, which means that the coloring is $\pi_1$-killing.

For each thick point of $\widetilde{Z}_0$, the total weight of the rainbow simplices touching that point is $2^n$ times the weight of that point; this can be seen by using Proposition~\ref{prop:finiteness} to divide a neighborhood of the thick point into layers, each of which looks like the situation of Lemma~\ref{lem:triangulate}.  Thus, taking the sum over all rainbow simplices gives the desired inequality.
\end{proof}

We prove analogues of Lemmas~\ref{lem:replacement} and~\ref{lem:volball} except that instead of applying the comparisons to a single ball of changing radius, we apply them to a disjoint union of thick balls, all of the same changing radius.  To describe this, we give the following definitions.  For any $r \in \mathbb{N}$, given clopen sets $A_1, \ldots, A_r \subseteq X$ and points $\widetilde{p}_1, \ldots, \widetilde{p}_r \in \widetilde{M}$, we refer to the resulting finite collection of thick points $A_1 \times \widetilde{p}_1, \ldots, A_r \times \widetilde{p}_r$ as a \textbf{\textit{multi-point}} $D$.  For each $\rho > 0$, we use the notation $D(\rho)$ to denote the finite collection of thick balls $A_1 \times B(\widetilde{p}_1, \rho), \ldots, A_r \times B(\widetilde{p}_r, \rho)$, and we refer to $D(\rho)$ as a \textbf{\textit{multi-ball}}.  The analogue of Lemma~\ref{lem:replacement} is the following.

\begin{lemma}\label{lem:replacement-cantor}
Let $M$, $\Gamma$, and $X$ be as in Definition~\ref{def}, and let $R > 0$ be arbitrary.  Let $\widetilde{P}$ be a pure $d$-dimensional $\Gamma$-equivariant thick polyhedron in $X \times \widetilde{M}$, and let $\widetilde{Z}$ be an $R$-separating subpolyhedron of $\widetilde{P}$.  Suppose that $\Area_{d-1}(\widetilde{Z})$ is within $\varepsilon$ of the infimal area of $R$-separating subpolyhedra of $\widetilde{P}$.  Let $D(R)$ be a multi-ball, such that the thick balls that make up $D(R)$ have disjoint and non-self-intersecting orbits.  Then for all $r_1, r_2$ with $0 < r_1 < r_2 < R$, we have
\[\int_{r_1}^{r_2} \Area_{d-1}  \left(\widetilde{Z} \cap D(\rho)\right) d\rho \leq  \Area_d  \left(\widetilde{P}  \cap D(r_2) \setminus D(r_1)\right)+ 2\varepsilon R.\]
\end{lemma}

\begin{proof}
The proof is the same as that of Lemma~\ref{lem:replacement}, but instead of considering replacing $Z$ inside a single approximate ball $\widehat{B}(p, \rho)$, we consider $\Gamma$-equivariantly replacing $\widetilde{Z}$ inside a disjoint union of finitely many non-self-intersecting orbits of thick approximate balls $A_i \times \widehat{B}(\widetilde{p}_i, \rho)$.  The function $\Area_d(D(\rho))$ is differentiable for almost every $\rho$; we claim that for almost every $\rho$, the thick subpolyhedron $\widetilde{Z'}$ of $\widetilde{P}$ obtained by replacing each translate of $\widetilde{Z} \cap \left(A_i \times \widehat{B}(\widetilde{p}_i, \rho)\right)$ by the corresponding translate of $\widetilde{P} \cap \left(A_i \times \widehat{S}(\widetilde{p}_i, \rho)\right)$ is an $R$-separating thick subpolyhedron of $\widetilde{P}$.

To prove this, consider the refinement of $\widetilde{P}$ that has $\widetilde{Z}$ as a subcomplex; we refer to this refinement simply as $\widetilde{P}$.  Similar to the proof of Lemma~\ref{lem:triangulate-cantor} we start by vertically refining $\widetilde{P}$ in the following way.  For each face $F$ of the corresponding polyhedron $P$, we find all pairs of $i \in \{1, \ldots, r\}$ and $\gamma \in \Gamma$ such that $\gamma B(\widetilde{p}_i, R)$ intersects $F$; there are finitely many such pairs.  We partition the thick face $A_F \times F$ by partitioning $A_F$ into the minimal sets of the algebra generated under union, intersection, and complement by the various sets $A_F \cap \gamma A_i$. Each minimal set is the intersection of various $A_F \cap \gamma A_i$ and/or $A_F \setminus \gamma A_i$, so there are finitely many minimal sets.  Doing this process for every face of $P$ gives a vertical refinement; again abusing notation, we refer to the refinement and its corresponding polyhedron simply as $\widetilde{P}$ and $P$.  For each face of $P$, various approximate-distance-to-$\widetilde{p}_i$ functions may be defined on disjoint portions of that face.  That is, if we consider the copy of original face $F$ corresponding to a particular set of $(i, \gamma)$ pairs, for each pair $(i, \gamma)$, the approximate distance to $\widetilde{p}_i$ is defined on $\gamma^{-1}F \cap B(\widetilde{p}_i, R)$, and by translating we may define it on $F \cap \gamma B(\widetilde{p}_i, R)$.  Patching the faces together, we obtain a smooth function defined on an open subset of $P$.  We choose $\rho$ to be a regular value of this smooth function.

Let $S_\rho \subseteq P$ denote the level set at value $\rho$ of this function on $P$.  Then $P$ is a Whitney stratified space, where the strata are the intersections of the faces of $P$ with $S_\rho$ or its complement, so the stratification can be refined to give a smooth triangulation~\cite{goresky88, goresky78}.  The resulting horizontal refinement of $\widetilde{P}$ has $\widetilde{Z'}$ as a thick subcomplex.  Furthermore, $\widetilde{Z'}$ is $R$-separating in $\widetilde{P}$, because for every connected component of $P \setminus Z'$, the corresponding lifted image in $\widetilde{M}$ either is contained in some $B(\widetilde{p}_i, R)$ or is contained in the lifted image in $\widetilde{M}$ of a connected component of the old $P \setminus Z$.  The remainder of the proof is the same as that of Lemma~\ref{lem:replacement}.
\end{proof}

The analogue of Lemma~\ref{lem:volball} is the following.

\begin{lemma}\label{lem:volball-cantor}
Let $M$, $\Gamma$, and $X$ be as in Definition~\ref{def}.  For all $R > 0$ and all $\varepsilon > 0$, there exists a $\Gamma$-equivariant thick $R$-separating filtration
\[X \times \widetilde{M} = \widetilde{Z}_n \supseteq \widetilde{Z}_{n-1} \supseteq \cdots \supseteq \widetilde{Z}_1 \supseteq \widetilde{Z}_0,\]
such that the following is true.  Let $D(R)$ be a multi-ball, such that the thick balls that make up $D(R)$ have disjoint and non-self-intersecting orbits.  Then for all $r_1, r_2$ with $0 < r_1 < r_2 < R$, we have
\[\Area_0\left(\widetilde{Z}_0 \cap D(r_1)\right) \cdot \frac{(r_2 - r_1)^n}{n!} \leq \Area_n(D(r_2)) + \varepsilon.\]
\end{lemma}

\begin{proof}
The proof follows from Lemma~\ref{lem:replacement-cantor} in the same way that the proof of Lemma~\ref{lem:volball} follows from Lemma~\ref{lem:replacement}.
\end{proof}

We divide the remainder of the proof of the main theorem into Lemmas~\ref{lem:separate-thick} and~\ref{lem:inequality-string}, as follows.

\begin{lemma}\label{lem:separate-thick}
Let $M$, $\Gamma$, and $X$ be as in Definition~\ref{def}.  There exists $m \in \mathbb{N}$ such that the following is true.  Let $\widetilde{Z} \subseteq X \times \widetilde{M}$ be a $0$-dimensional $\Gamma$-equivariant thick polyhedron.  Then we can find a multi-point $D$ with the following properties:
\begin{enumerate}
\item $ D \subseteq \widetilde{Z} \subseteq \Gamma\left(D\left(\frac{1}{2}\right)\right)$.
\item The orbits of thick balls of $ D\left(\frac{1}{4}\right)$ are disjoint and non-self-intersecting.
\item $D$ can be partitioned into multi-points $D_1, \ldots, D_m$ such that for each $\ell = 1, \ldots, m$, the orbits of thick balls of the multi-ball $\displaystyle D_{\ell}(1)$ are disjoint and non-self-intersecting.
\end{enumerate}
\end{lemma}

\begin{proof}
First we claim that $X$ can be partitioned into finitely many clopen sets $Y_1, \ldots, Y_s$, with the property that for every $j = 1, \ldots, s$ and every $\widetilde{p} \in \widetilde{M}$, the thick ball $Y_j \times B(\widetilde{p}, 1)$ has non-self-intersecting orbit.  To show this, we observe that there are finitely many elements of $\Gamma$ that move any points of $\widetilde{M}$ a distance of at most $2$.  As in~\cite{braun21}, if we put a metric on $X$, because the action of $\Gamma$ on $X$ is free and $X$ is compact, there is some minimum distance that the points in $X$ are moved by these finitely many elements.  Thus, we can partition $X$ into clopen sets $Y_1, \ldots, Y_s$ that each have diameter less than this minimum distance, and these sets have the desired property.

Next, let $A_1\times \widetilde{p}_1, \ldots, A_r \times \widetilde{p}_r$ be thick points such that $\widetilde{Z}$ is the union of their orbits.  We use the following process to construct $D$.  For each $i = 1, \ldots, r$ and each $j = 1, \ldots, s$, we recursively define the clopen set $E_{ij} \subseteq X$ to be the set of all $x \in A_i \cap Y_j$ for which the leaf $x \times B(\widetilde{p}_i, \frac{1}{4})$ does not intersect any translate of any earlier $E_{i'j'} \times B(\widetilde{p}_{i'}, \frac{1}{4})$, which are those with $i' < i$ and $j'$ arbitrary, or with $i' = i$ and $j' < i$.

We let $D$ consist of the various $E_{ij} \times \widetilde{p}_i$.  The orbits of the thick balls of $D(\frac{1}{4})$ are non-self-intersecting, because each is contained in some $Y_j \times B(\widetilde{p}_i, 1)$; and, they are disjoint by construction.  We also observe that $D$ is maximal in the sense that for all $x \times \widetilde{p}_i \in \widetilde{Z}$, the leaf $x \times B(\widetilde{p}_i, \frac{1}{4})$ must intersect the orbit of a thick ball of $D(\frac{1}{4})$.  This implies that the orbits of the thick balls of $D(\frac{1}{2})$ together contain all of $\widetilde{Z}$.  Thus, the first two properties of the lemma statement hold.  At this point, we renumber the thick points of $\widetilde{Z}$ and $D$, so that $\widetilde{Z}$ is still the union of the orbits of disjoint thick points $A_1 \times \widetilde{p}_1, \ldots, A_r \times \widetilde{p}_r$ but some $\widetilde{p}_i$ may be equal to each other, and $D$ is the union of disjoint thick points $E_1\times \widetilde{p}_1, \ldots, E_r \times \widetilde{p}_r$, with the following properties:
\begin{itemize}
\item for each $i$ we have $E_i \subseteq A_i$,
\item  the orbits of thick balls $E_i \times B\left(\widetilde{p}_i, \frac{1}{4}\right)$ are disjoint and non-self-intersecting, and 
\item the orbits of thick balls $E_i \times B\left(\widetilde{p}_i, 1\right)$ are non-self-intersecting but not necessarily disjoint.
\end{itemize}

To partition $D$ to satisfy the third property of the lemma statement, we use a greedy coloring argument.  Informally, for each leaf $x \times B(\widetilde{p}_i, 1)$ of $D(1)$, we assign that leaf to the set $D_{\ell}$ with the least possible $\ell$ such that this leaf does not intersect the orbit of any other leaf already allocated to $D_\ell$.  To formalize this idea, for each $i = 1, \ldots, r$ and each $j \in \mathbb{N}$ in order, we recursively define the clopen set $F_{i, j}$ to be the set of all $x \in E_i$ with the property that for all $j' < j$ there exists $i' < i$ such that $x \times B(\widetilde{p}_i, 1)$ intersects the orbit of $(F_{i', j'} \setminus F_{i', j'+1}) \times B(\widetilde{p}_{i'}, 1)$.  (Informally, this is the part of $E_i$ for which the corresponding $\ell$ has to be at least $j$.)  Then, we define $D_{\ell}$ such that $D_{\ell} \cap (E_{i} \times \widetilde{p}_i)$ is $(F_{i, \ell} \setminus F_{i, \ell+1}) \times \widetilde{p}_i$.  By construction, the orbits of the thick balls in each $D_{\ell}(1)$ are disjoint.

We claim that there exists $m$, depending only on $M$ and not on $\widetilde{Z}$, such that for all $\ell > m$ the resulting set $D_{\ell}$ is empty.  Let $m$ be the maximum number such that in $\widetilde{M}$ there exists a ball of radius $\frac{9}{4}$ that contains $m$ disjoint balls of radius $\frac{1}{4}$.  This number $m$ is finite because $M$ is compact; for instance, $m$ is no more than the number of balls of radius $\frac{1}{4}$ needed to cover the $\frac{9}{4}$-neighborhood of a fixed fundamental domain for $M$ in $\widetilde{M}$, because the center of the $\frac{9}{4}$-ball can be taken to be inside the fundamental domain, and the centers of the disjoint $\frac{1}{4}$-balls are contained in distinct $\frac{1}{4}$-balls of the open cover.  

Suppose for the sake of contradiction that some $x \in E_i$ is in $F_{i, m+1}$.  This means that there are at least $m$ translates $\gamma_{i'}(E_{i'} \times B(\widetilde{p}_{i'}, 1))$ that intersect $x \times B(\widetilde{p}_i, 1)$; in particular, their projections to the $X$ direction contain $x$.  However, in earlier steps we have constructed the corresponding thick balls of radius $\frac{1}{4}$ to be disjoint.  Thus, the balls $B(\gamma_{i'}\widetilde{p}_{i'}, \frac{1}{4})$, together with $B(\widetilde{p}_i, \frac{1}{4})$, are a family of at least $m+1$ disjoint balls inside the ball $B(\widetilde{p}_i, \frac{9}{4})$, giving a contradiction.  We conclude that there are no more than $m$ sets $D_{\ell}$ in our partition of $D$.
\end{proof}

The following lemma estimates $\Area_0\left(\widetilde{Z}_0\right)$ by adding up the contributions from the multi-balls of Lemma~\ref{lem:separate-thick}, similar to the proof of Theorem~\ref{thm:baby}.

\begin{lemma}\label{lem:inequality-string}
Let $M$, $\Gamma$, and $X$ be as in Definition~\ref{def}.  Let $m$ be as in Lemma~\ref{lem:separate-thick}.  For $R = 1$ and any $\varepsilon > 0$, consider the $\Gamma$-equivariant thick $R$-separating filtration guaranteed by Lemma~\ref{lem:volball-cantor}.  Then the $0$-dimensional level of this filtration satisfies
\[\Area_0\left(\widetilde{Z}_0\right) \leq 2^n \cdot n! \cdot \left(m\varepsilon + 4^n\cdot n! \cdot V_1(\widetilde{M}) \cdot (\Vol M + \varepsilon)\right).\]
\end{lemma}

\begin{proof}
We use the multi-points $D_1, \ldots, D_m$ from applying Lemma~\ref{lem:separate-thick} to $\widetilde{Z}_0$.  Taking $r_1 = \frac{1}{2}$ and $r_2 \rightarrow 1$ gives
\[\Area_0\left(\widetilde{Z}_0 \cap D_\ell\left(\frac{1}{2}\right)\right) \cdot \frac{(1/2)^n}{n!} \leq \Area_n(D_\ell(1)) + \varepsilon\]
for each $\ell = 1, \ldots, m$, and we also know
\[\Area_n(D_\ell(1)) \leq V_1(\widetilde{M}) \cdot \Area_0(D_\ell).\]
Then, taking $r_1 \rightarrow 0$ and $r_2 = \frac{1}{4}$ with $D = \bigcup_{i = 1}^m D_i$ gives
\[\Area_0 \left(\widetilde{Z}_0 \cap D\right) \cdot \frac{(1/4)^n}{n!} \leq \Area_n \left(D\left(\frac{1}{4}\right)\right) + \varepsilon.\]
The properties from Lemma~\ref{lem:separate-thick} imply
\[\sum_{i=\ell}^m \Area_0(D_\ell) = \Area_0\left(\widetilde{Z}_0 \cap D\right),\]
\[ \Area_0\left(\widetilde{Z}_0\right) \leq \sum_{\ell =1}^m \Area_0\left(\widetilde{Z}_0 \cap D_\ell\left(\frac{1}{2}\right)\right),\]
\[\Area_n\left(D\left(\frac{1}{4}\right)\right) \leq \Vol M,\]
and for each $\ell = 1, \ldots, m$,
\[\Area_n(D_\ell(1)) \leq V_1(\widetilde{M}) \cdot \Area_0(D_\ell).\]
Putting the inequalities together gives
\begin{align*}
\Area_0\left(\widetilde{Z}_0\right) & \leq  \sum_{\ell = 1}^m \Area_0\left(\widetilde{Z}_0 \cap D_\ell\left(\frac{1}{2}\right)\right)\leq \\
& \leq  2^n\cdot n! \cdot \sum_{\ell = 1}^m\left( \Area_n(D_\ell(1)) + \varepsilon\right) \leq \\
& \leq  2^n \cdot n! \cdot \left(m\varepsilon + V_1(\widetilde{M}) \cdot \sum_{\ell = 1}^m \Area_0(D_\ell)\right) =\\
& =  2^n \cdot n! \cdot \left(m\varepsilon + V_1(\widetilde{M}) \cdot \Area_0\left(\widetilde{Z}_0 \cap D\right)\right)\leq \\
& \leq  2^n \cdot n! \cdot \left(m\varepsilon + 4^n \cdot n! \cdot V_1(\widetilde{M}) \cdot \left(\Area_n\left(D\left(\frac{1}{4}\right)\right) + \varepsilon\right)\right) \leq\\
& \leq 2^n \cdot n! \cdot \left(m\varepsilon + 4^n\cdot n! \cdot V_1(\widetilde{M}) \cdot (\Vol M + \varepsilon)\right).
\end{align*}
\end{proof}

Combining Lemma~\ref{lem:inequality-string} with Lemma~\ref{lem:triangulate-cantor} completes the proof of our main theorem, as follows.

\begin{proof}[Proof of Theorem~\ref{thm:big}]
To prove the statement about the case $V_1(\widetilde{M}) < \frac{1}{n!}$, we apply Lemma~\ref{lem:volball-cantor} with $R = 1$ and $\varepsilon < \frac{1}{n!} - V_1(\widetilde{M})$, and then apply Lemma~\ref{lem:separate-thick} to the resulting set $\widetilde{Z}_0$  to find $D_1, \ldots, D_m$.  If any of these $D_\ell$ has nonzero measure, the conclusion of Lemma~\ref{lem:volball-cantor} with $r_1 \rightarrow 0$ and $r_2 \rightarrow 1$ gives a contradiction.  Thus, $\widetilde{Z}_0$ is empty, and by Lemma~\ref{lem:triangulate-cantor} and Corollary~\ref{thm:rainbow-cor} the image of the fundamental class of $M$ is zero in $H_*(B\Gamma; \mathbb{Q})$.

In the case where $V_1(\widetilde{M}) \geq \frac{1}{n!}$, for every $\varepsilon > 0$ we apply Lemma~\ref{lem:volball-cantor} with $R = 1$, and then apply Lemma~\ref{lem:triangulate-cantor} to find a fundamental cycle for $M$ with total contribution from rainbow simplices equal to $2^n \cdot \Area_0\left(\widetilde{Z}_0\right)$.  Substituting Lemma~\ref{lem:inequality-string} implies
\[\Vert M \Vert_{\Delta} \leq 2^n \cdot \Area_0\left(\widetilde{Z}_0\right) \leq 4^n\cdot n!\left(m\varepsilon + 4^n\cdot n!\cdot V_1(\widetilde{M})\cdot \left(\Vol M + \varepsilon\right)\right),\]
so taking $\varepsilon \rightarrow 0$ gives
\[\Vert M \Vert_{\Delta} \leq 16^n(n!)^2 \cdot V_1(\widetilde{M}) \cdot \Vol M.\]
\end{proof}

\section{Integral foliated simplicial volume}\label{sec:foliated}

We have succeeded in proving Theorem~\ref{thm:big}, which corresponds to Theorems~1.1 and~1.3 of~\cite{braun21} and is the main goal of that paper.  In the course of their proof, Braun and Sauer deduce some additional corollaries, which are Theorems~1.4 and~1.5 of their paper.  In this section we explore whether our new proof method also produces these corollaries.  The answer is that it does, except that we need a small hypothesis not included in~\cite{braun21}: we assume that the fundamental group $\Gamma$ does not have any torsion.

These corollaries depend on using coefficients in $\mathbb{Z}$ rather than in $\mathbb{R}$.  Our proof above uses $\mathbb{R}$-coefficients in two ways: once to weight each thickened simplex by the measure of its Cantor set component, and once in the simplex straightening step, where each straightened simplex is chosen to be an average of permuted and translated simplices.  For the first use of $\mathbb{R}$-coefficients, we are in the same situation as Braun and Sauer.  Once we express our proof slightly differently, so that the norm of the fundamental cycle is estimated in the Cantor bundle rather than downstairs in $M$ or $B\Gamma$, we can reproduce their proofs of the corollaries.  For the second use of $\mathbb{R}$-coefficients, we need to eliminate the averaging from the simplex straightening step, which is not a step in the Braun--Sauer proof.  To eliminate the averaging over permutations, we use the work of Campagnolo and Sauer in~\cite{campagnolo19}.  I do not know how to make a similar modification to eliminate the averaging over translations, which is why we introduce the additional hypothesis not assumed by Braun and Sauer.

The corollaries follow from one main estimate, which we state as Theorem~\ref{thm:cor-source} below.  The statements of the theorem and corollaries need several definitions from~\cite{braun21}.  First is the definition of what can be thought of informally as the singular homology of a Cantor bundle.  Let $P$ be a space with fundamental group $\Gamma$, so that $\Gamma$ acts on $X \times \widetilde{P}$.  As usual, $C_d(\widetilde{P}; \mathbb{Z})$ denotes the singular $d$-chains on $\widetilde{P}$ for each degree $d$.  We would like to be able to thicken these by clopen subsets of the Cantor set $X$.  Formally, let $C(X; \mathbb{Z})$ be the set of functions from $X$ to $\mathbb{Z}$ that are continuous---that is, the preimage of every integer is a clopen set.  Then the homology we are looking for is that of the chain complex
\[C(X; \mathbb{Z}) \otimes_{\mathbb{Z}[\Gamma]} C_d(\widetilde{P}; \mathbb{Z}).\]
Given a cycle $z = \sum_{i} f_i \otimes \sigma_i$ in this chain complex, where none of the $\sigma_i$ are $\Gamma$-translates of each other, the analogue of the $\ell^1$ norm of $z$ is
\[\abs{z}_1 = \sum_i \int \abs{f_i}\ d\mu.\]
Then the \textbf{\textit{$X$-parametrized integral $\ell^1$-semi-norm}} of a homology class of this chain complex is the infimum of $\abs{z}_1$ over all cycles $z$ representing the homology class.  The $X$-parametrized integral $\ell^1$-semi-norm of the class $[z]$ is denoted by $\Vert [z] \Vert_{\mathbb{Z}}^X$.  Given any singular cycle on $P$ with $\mathbb{Z}$-coefficients, we can build a corresponding cycle in the chain complex we have just constructed, by replacing each simplex $\sigma_i$ by $1 \otimes \sigma_i$.  We denote this map on homology by $j_*^{P}$.

The theorem below gives an upper bound for the quantity $\Vert j_*^{B\Gamma}c_*[M]\Vert_{\mathbb{Z}}^X$, where $c\co M \rightarrow B\Gamma$ is the classifying map.  This means, then, that we take a fundamental cycle of $M$, map it to the classifying space $B\Gamma$, find the corresponding cycle on $X \times E\Gamma$, and construct a homologous cycle (with integer coefficients) for which the norm is sufficiently small.  As before, we let $V_1(\widetilde{M})$ denote the supremal volume of balls of radius $1$ in the universal cover of $M$.

\begin{theorem}\label{thm:cor-source}
Let $M$ be a closed, oriented, connected $n$-dimensional Riemannian manifold.  Let $\Gamma = \pi_1(M)$, suppose that $\Gamma$ is torsion-free, and consider a free, continuous action of $\Gamma$ on the Cantor set $X$, with a $\Gamma$-invariant probability measure.  Then the $X$-parametrized integral $\ell^1$-semi-norm of the image of the fundamental homology class of $M$ under the classifying map satisfies the inequality
\[\Vert j_*^{B\Gamma}c_*[M]\Vert_{\mathbb{Z}}^X \leq (n+1)16^n(n!)^3\cdot V_1(\widetilde{M}) \cdot \Vol M.\]
\end{theorem}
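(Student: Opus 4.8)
The plan is to rerun the proof of Theorem~\ref{thm:big}, but instead of projecting down to $M$ and estimating the real simplicial norm, keep the fundamental cycle living in the Cantor bundle and estimate its $X$-parametrized integral simplicial norm directly, taking care that the straightening step does not introduce denominators. First I would invoke Lemma~\ref{lem:volball-cantor} with $R=1$ to produce a $\Gamma$-equivariant thick $1$-separating filtration $X\times\widetilde M=\widetilde Z_n\supseteq\cdots\supseteq\widetilde Z_0$ that is area-minimizing up to $\varepsilon$ at each level, and then Lemma~\ref{lem:triangulate-cantor} to get a thick triangulation of $X\times\widetilde M$ with a $\Gamma$-equivariant, $\pi_1$-killing coloring whose rainbow $n$-simplices sit $2^n$ per point of $\widetilde Z_0$. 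This triangulation, together with the function on $X$ recording which clopen piece each sheet lies in, assembles into a cycle $\widetilde z=\sum_i f_i\otimes\widetilde\sigma_i$ in $C(X;\mathbb Z)\otimes_{\mathbb Z[\Gamma]}C_n(E\Gamma;\mathbb Z)$ representing $j_*^{B\Gamma}c_*[M]$ (pushing the $\widetilde\sigma_i$ forward to $E\Gamma$ via the equivariant classifying map $\widetilde M\to E\Gamma$). Its norm $\sum_i|\int f_i\,d\mu|$ is at most $\Area_n(X\times\widetilde M)=\Vol M$ times the number of $n$-simplices per fundamental domain, but that crude bound is too big; instead I follow the Amenable Reduction argument as in the sketch of Theorem~\ref{thm:rainbow}, collapsing each monochromatic $1$-complex to a point (legal since the coloring is $\pi_1$-killing and each color sits in a radius-$1$ ball, hence dies in $\pi_1$) and then straightening.

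The straightening must be done with integer coefficients, and this is where the no-torsion hypothesis on $\Gamma$ and the Campagnolo--Sauer machinery from~\cite{campagnolo19} enter. Because $\Gamma$ is torsion-free, there are no finite-order elements to average over, so the only averaging in the usual straightening is over the $(n+1)!$ permutations of the vertices; Campagnolo and Sauer's integer simplex straightening replaces this averaging by an honest integral chain map at the cost of a multiplicative factor of $(n+1)!$ on the number of simplices (one keeps a single ordered straight simplex per ordered tuple of vertices rather than the signed average of its permutations). After collapsing the monochromatic subcomplexes, every non-rainbow simplex has two vertices mapping to the same point of $E\Gamma$, so its integer-straightened image is degenerate and contributes zero; each rainbow simplex contributes its original coefficient $\int f_i\,d\mu$ up to the factor $(n+1)!$ from the ordered straightening. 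Hence
\[
\Vert j_*^{B\Gamma}c_*[M]\Vert_{\mathbb Z}^X\ \le\ (n+1)!\cdot\!\!\sum_{\mathrm{rainbow}\ \widetilde\sigma_i}\!\!\Bigl|\int f_i\,d\mu\Bigr|\ =\ (n+1)!\cdot 2^n\cdot\Area_0(\widetilde Z_0).
\]

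It remains to bound $\Area_0(\widetilde Z_0)$ by $8^n(n!)^2\cdot V_1\cdot\Vol M$, which is exactly the computation already carried out inside the proof of Theorem~\ref{thm:big}: choose an equivariant maximal family of disjoint thick balls $E_i\times B_i(\tfrac14)$ centered at points of $\widetilde Z_0$, so that the $E_i\times B_i(\tfrac12)$ cover $\widetilde Z_0$; apply Lemma~\ref{lem:volball-cantor} with $r_1=\tfrac12,\ r_2\to1$ to get $\Area_0(\widetilde Z_0\cap(E_i\times B_i(\tfrac12)))\cdot\frac{(1/2)^n}{n!}\le\mu(E_i)V_1$, apply it again with $r_1\to0,\ r_2=\tfrac14$ to get $\mu(E_i)\frac{(1/4)^n}{n!}\le\mu(E_i)\Vol B_i(\tfrac14)$, sum over $i$, use disjointness of the radius-$\tfrac14$ thick balls to see $\sum_i\mu(E_i)\Vol B_i(\tfrac14)\le\Vol M$, and let $\varepsilon\to0$; this yields $\Area_0(\widetilde Z_0)\le (n!)^2\cdot 8^n\cdot V_1\cdot\Vol M$. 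Combining with the displayed inequality gives $(n+1)!\cdot 2^n\cdot(n!)^2\cdot 8^n\cdot V_1\cdot\Vol M=(n+1)16^n(n!)^3\cdot V_1\cdot\Vol M$, as claimed. The main obstacle I anticipate is bookkeeping in the straightening step: making sure that the collapse of monochromatic subcomplexes and the Campagnolo--Sauer integral straightening are compatible with the $C(X;\mathbb Z)\otimes_{\mathbb Z[\Gamma]}(-)$ formalism — i.e. that they are $\Gamma$-equivariant chain maps defined on the level of $E\Gamma$ and hence descend to the tensor product without disturbing the $X$-coefficient functions $f_i$ — and verifying that the factor $(n+1)!$ is all that the ordered straightening costs; the geometric volume estimate is then a verbatim replay of the Theorem~\ref{thm:big} argument.
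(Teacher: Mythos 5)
Your proposal follows essentially the same route as the paper: keep the cycle upstairs in the Cantor bundle, reuse Lemma~\ref{lem:triangulate-cantor} to get $\sum_{\mathrm{rainbow}}|\int f_i\,d\mu| = 2^n\Area_0(\widetilde Z_0)$, prove an integral analogue of Theorem~\ref{thm:rainbow} via Campagnolo--Sauer straightening with the torsion-free hypothesis eliminating the averaging over $\Gamma$-translates, and replay the ball-packing estimate $\Area_0(\widetilde Z_0)\le 8^n(n!)^2V_1\Vol M$. One small correction to your account of the straightening: the mechanism is not ``one ordered straight simplex per ordered tuple'' (that would leave non-rainbow simplices as nonzero degenerate chains, which do count toward the $\ell^1$ norm); rather, each simplex is replaced by the signed sum of the $(d+1)!$ simplices of its barycentric subdivision, so that the straightening is alternating under vertex permutations, and a simplex with two coinciding vertices is killed because the transposition swapping them fixes the simplex while negating its straightening.
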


First we state the corollaries and summarize how Braun and Sauer deduce them from this estimate.  Then, at the end of this section we show how to modify our main proof to prove this estimate.

For the first corollary we need the following definitions given in~\cite{braun21} and attributed to~\cite{frigerio16, gromov99, schmidt05}.  Our action of $\Gamma$ on $X$ is an example of a free probability-measure-preserving action $\alpha$ of $\Gamma$ on a \textbf{\textit{standard probability space}}, meaning a Polish space where the set of measurable sets is equal to the Borel $\sigma$-algebra.  For such an action, the norm that we have denoted by $\Vert j_*^{M} [M] \Vert_{\mathbb{Z}}^X$ can also be denoted by $\abs{M}^\alpha$, and the infimum of all $\abs{M}^\alpha$ over such actions of $\Gamma = \pi_1(M)$ on standard probability spaces is called the \textbf{\textit{integral foliated simplicial volume}} of $M$.  The definition in~\cite{frigerio16} uses the chain complex $L^\infty(X; \mathbb{Z}) \otimes_{\mathbb{Z}[\Gamma]} C_d(\widetilde{P}; \mathbb{Z})$ where we have used $C(X; \mathbb{Z}) \otimes_{\mathbb{Z}[\Gamma]} C_d(\widetilde{P}; \mathbb{Z})$, but our upper bound in Theorem~\ref{thm:cor-source} applies to either version of the definition, because the relevant functions on $X$ are constructed to have image in $\{-1, 0, 1\}$ as well as being continuous.  Note that $\Vert M \Vert_{\Delta} \leq \abs{M}^\alpha$ for all $\alpha$, because every cycle used to define $\abs{M}^\alpha$ gives rise to a fundamental cycle of $M$ with real coefficients and the same $\ell^1$-semi-norm, by integrating in the $X$ direction as in~\cite[Remark 3.4]{braun21}, which corresponds geometrically to projecting from $X \times_{\Gamma} \widetilde{M}$ to $M$.

The following corollary is the same as Theorem~1.5 of~\cite{braun21}, because the assumption that $M$ is aspherical implies our hypothesis about $\Gamma$ being torsion-free.

\begin{corollary}[Theorem~1.5 of~\cite{braun21}]\label{cor:first}
Let $M$ be a closed, oriented, connected $n$-dimensional Riemannian manifold, and suppose that $\widetilde{M}$ is contractible.  Then for any free probability-measure-preserving action $\alpha$ of $\Gamma = \pi_1(M)$ on a standard probability space, we have
\[\abs{M}^\alpha \leq (n+1)16^n(n!)^3\cdot V_1(\widetilde{M}) \cdot \Vol M,\]
so $(n+1)16^n(n!)^3\cdot V_1(\widetilde{M}) \cdot \Vol M$ is an upper bound on the integral foliated simplicial volume of $M$.
\end{corollary}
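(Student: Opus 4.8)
The plan is to deduce this corollary directly from Theorem~\ref{thm:cor-source} together with the definitions just recalled, so that almost no new work is required. First I would observe that the hypothesis ``$\widetilde{M}$ is contractible'' (i.e.\ $M$ is aspherical) has two consequences: it implies that $\Gamma = \pi_1(M)$ has no nontrivial elements of finite order (a finite subgroup of $\Gamma$ would act freely on the contractible $\widetilde{M}$, forcing a finite-dimensional model of $B(\mathbb{Z}/p)$, which is impossible), so the standing hypothesis of Theorem~\ref{thm:cor-source} is met; and it implies that the classifying map $c\co M \to B\Gamma$ can be taken to be a homotopy equivalence, so that $c_*\co H_n(M;\mathbb{Z}) \to H_n(B\Gamma;\mathbb{Z})$ is an isomorphism and the $X$-parametrized chain-level comparison map identifies $\Vert j_*^{M}[M]\Vert_{\mathbb{Z}}^X$ with $\Vert j_*^{B\Gamma}c_*[M]\Vert_{\mathbb{Z}}^X$. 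The second step is to unwind the definition: for a free probability-measure-preserving action $\alpha$ of $\Gamma$ on a standard probability space, the quantity $\abs{M}^\alpha$ is by definition $\Vert j_*^{M}[M]\Vert_{\mathbb{Z}}^X$, so the two combine to give $\abs{M}^\alpha = \Vert j_*^{B\Gamma}c_*[M]\Vert_{\mathbb{Z}}^X$.

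Next I would address the fact that Theorem~\ref{thm:cor-source} is stated only for $X$ equal to the Cantor set with a specified free, continuous, measure-preserving $\Gamma$-action, whereas the corollary ranges over \emph{all} free probability-measure-preserving actions on standard probability spaces. The point, which I would cite from~\cite{braun21} (this is exactly the reduction they use, attributed to~\cite{gromov99, schmidt05}), is that the infimum defining integral foliated simplicial volume is already achieved, up to any $\varepsilon$, among profinite-type actions, and more to the point that the specific Cantor action constructed via~\cite{hjorth06} dominates an arbitrary such $\alpha$: any free p.m.p.\ action of $\Gamma$ is a factor of (or can be compared to) the Cantor action in a way that does not increase the $X$-parametrized norm. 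Since the bound in Theorem~\ref{thm:cor-source} does not depend on which Cantor action we chose, we get the same upper bound $(n+1)16^n(n!)^3 \cdot V_1 \cdot \Vol M$ for every $\abs{M}^\alpha$. Taking the infimum over $\alpha$ then gives the final sentence about integral foliated simplicial volume.

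Concretely, then, the proof is three lines: (1) asphericity gives no finite-order elements and makes $c$ a homotopy equivalence, so Theorem~\ref{thm:cor-source} applies and $\abs{M}^\alpha = \Vert j_*^{B\Gamma}c_*[M]\Vert_{\mathbb{Z}}^X$ for the Cantor action; (2) the comparison of actions from~\cite{braun21} transfers the bound to an arbitrary free p.m.p.\ action $\alpha$; (3) apply Theorem~\ref{thm:cor-source} and take the infimum. I expect the only genuinely delicate point — and hence the main obstacle — to be step (2), namely making precise the monotonicity/weak-containment statement that lets the single Cantor-set estimate control every standard-probability-space action; this is not reproved here but is exactly the mechanism already in the Braun--Sauer paper, so I would invoke it by reference rather than reprove it. Everything else (the homotopy equivalence, the identification of $\abs{M}^\alpha$ with the $X$-parametrized norm, the finite-order claim) is routine.
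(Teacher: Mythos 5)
Your proposal is correct and follows essentially the same route as the paper: asphericity rules out finite-order elements in $\Gamma$ (your quotient-of-$\widetilde{M}$ argument is the same as the paper's covering-space argument), lets one take $B\Gamma = M$ so that $\abs{M}^\alpha = \Vert j_*^{B\Gamma}c_*[M]\Vert_{\mathbb{Z}}^X$ for the Cantor action, and the transfer to an arbitrary free p.m.p.\ action is deferred to the comparison-of-actions machinery in~\cite{braun21}. The only difference is bibliographic: the paper attributes that transfer step to a theorem of~\cite{elek21} (an embedding of the Cantor system into the given standard probability space) rather than to~\cite{gromov99, schmidt05}, but the mechanism invoked is the same.
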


\begin{proof}
Because $M$ is aspherical, we can deduce that $\Gamma$ is torsion-free; see~\cite[Corollary VII.2.5]{brown82}.  Thus we may apply Theorem~\ref{thm:cor-source}.  

The remainder of the proof is identical to the proof in~\cite{braun21}.  Because $\widetilde{M}$ is contractible, we can take $B\Gamma$ to be equal to $M$, so $j_*^{B\Gamma}c_*[M] = j_*^M[M]$.  This proves the desired bound on $\abs{M}^\alpha$ in the case where $\alpha$ is our chosen action of $\Gamma$ on the Cantor set $X$.  Then a theorem from~\cite{elek21} shows that given any other $\alpha$ on any other standard probability space, there is a suitable embedding of $X$ into that space that allows us to deduce the conclusion.
\end{proof}

The second corollary is the same as Theorem~1.4 of~\cite{braun21}, except that in part (1) we add the hypothesis that $\Gamma$ is torsion-free.  For information about the von Neumann rank we refer to~\cite[Definition 3.8]{braun21}, and for information about $\ell^2$-Betti numbers, Braun and Sauer cite L\"uck~\cite{lueck98, lueck02}.

\begin{corollary}[Theorem~1.4 of~\cite{braun21}]\label{cor:second}
Let $M$ be a closed, oriented, connected $n$-dimensional Riemannian manifold.
\begin{enumerate}[(1)]
\item Let $\Gamma = \pi_1(M)$, and suppose that $\Gamma$ is torsion-free.  Then the von Neumann rank of the image $c_*[M] \in H_n(B\Gamma)$ of the fundamental homology class of $M$ under the classifying map is bounded above by $n(n+1)16^n(n!)^3\cdot V_1(\widetilde{M}) \cdot \Vol M$.
\item Suppose that $\widetilde{M}$ is contractible.  Then the $\ell^2$-Betti numbers of $M$ satisfy
\[\beta_i^{(2)}(M) \leq (n+1)16^n(n!)^3\cdot V_1(\widetilde{M}) \cdot \Vol M\]
for every $i \in \mathbb{N}$, and the Euler characteristic satisfies
\[\abs{\chi(M)} \leq n(n+1)16^n(n!)^3\cdot V_1(\widetilde{M}) \cdot \Vol M.\]
\end{enumerate}
\end{corollary}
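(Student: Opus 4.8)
The plan is to rerun the proof of Theorem~\ref{thm:big} but to track the fundamental cycle and its straightening entirely inside the Cantor bundle $X \times E\Gamma$, and to replace the straightening map by one that uses no averaging over permutations. First I would apply Lemma~\ref{lem:volball-cantor} with $R = 1$ to obtain a $\Gamma$-equivariant thick $1$-separating filtration of $X \times \widetilde M$, and then Lemma~\ref{lem:triangulate-cantor} to obtain a fundamental cycle $z = \sum_i a_i \sigma_i$ for $M$ together with a $\pi_1$-killing coloring whose rainbow simplices contribute a total of $2^n \cdot \Area_0(\widetilde Z_0)$. The volume estimate of Theorem~\ref{thm:baby}'s endgame---disjointing balls of radius $\tfrac14$ centered at points of $\widetilde Z_0$, covering by balls of radius $\tfrac12$, and applying Lemma~\ref{lem:volball-cantor} twice---then bounds $2^n \cdot \Area_0(\widetilde Z_0)$ by $16^n (n!)^2 \cdot V_1 \cdot \Vol M$, exactly as before. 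So far this only reproduces the real-coefficient bound; the content of the theorem is to push the cycle into $X \times E\Gamma$ with \emph{integer} coefficients and a norm larger only by a factor of $(n+1) \cdot n!$.

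Next I would express the argument of Theorem~\ref{thm:rainbow} (and Corollary~\ref{thm:rainbow-cor}) one level up: rather than homotoping and straightening $\alpha(z)$ downstairs in $B\Gamma$, I carry the $\pi_1$-killing coloring along the classifying map, collapse each color class to a point in $X \times E\Gamma$ equivariantly, and then straighten. The key change is the straightening operator. The usual operator chooses, for each tuple of vertices in $E\Gamma$, the signed average over permutations and over the finitely many $\Gamma$-translations fixing the vertex set; here I invoke the integer straightening of Campagnolo and Sauer~\cite{campagnolo19} to remove the permutation averaging, at the cost of a multiplicative factor of $(n+1)!$ on the number of simplices (or $(n+1) \cdot n!$ after bookkeeping), and I use the hypothesis that $\Gamma$ is torsion-free to guarantee that no nontrivial $\Gamma$-element fixes a $(d+1)$-tuple of vertices, so the translation averaging is vacuous. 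With both averagings gone, the straightening sends each non-rainbow simplex to zero and each rainbow simplex to a $\mathbb Z$-linear combination of at most $(n+1)!$ straightened simplices, each still weighted by the measure $\mu(A)$ of its Cantor component. Bundling the clopen weights into functions $f_i \in C(X;\mathbb Z)$ gives a cycle in $C(X;\mathbb Z) \otimes_{\mathbb Z[\Gamma]} C_*(E\Gamma;\mathbb Z)$ representing $j_*^{B\Gamma}c_*[M]$, whose $\abs{\cdot}_1$-norm is at most $(n+1)! \cdot 2^n \cdot \Area_0(\widetilde Z_0) \le (n+1) \cdot n! \cdot 16^n (n!)^2 \cdot V_1 \cdot \Vol M = (n+1) 16^n (n!)^3 \cdot V_1 \cdot \Vol M$, which is the claimed bound.

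The main obstacle I expect is making the integer-straightening step genuinely compatible with the Cantor bundle structure: one must check that the Campagnolo--Sauer straightening, which is defined simplex-by-simplex from the vertex tuple, respects the $\Gamma$-equivariance and the clopen decompositions supplied by Proposition~\ref{prop:finiteness}, so that the output is an honest element of $C(X;\mathbb Z) \otimes_{\mathbb Z[\Gamma]} C_*(\widetilde{E\Gamma};\mathbb Z)$ rather than something only defined measurably. This is where the torsion-freeness of $\Gamma$ is essential---it is exactly what lets us dispense with the translation averaging that has no integral analogue---and it is also the step where care is needed to see that collapsing color classes to points does not introduce degenerate simplices with ill-defined straightenings; since each color class sits in a single sheet and in a ball of radius $1$, which maps to a point of $E\Gamma$ under the collapse, the collapsed edges are genuinely trivial and the straightened rainbow simplices have $n+1$ distinct, non-$\Gamma$-equivalent vertices, so their straightenings are nonzero and well-defined. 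Once this compatibility is verified, the counting is routine and identical in spirit to the proof of Theorem~\ref{thm:big}.
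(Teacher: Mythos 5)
Your proposal proves the wrong statement. What you have written is a proof of Theorem~\ref{thm:cor-source}, the upper bound on the $X$-parametrized integral simplicial norm $\Vert j_*^{B\Gamma}c_*[M]\Vert_{\mathbb{Z}}^X$. The statement under review is the corollary about von Neumann rank, $\ell^2$-Betti numbers, and Euler characteristic, and nothing in your proposal mentions any of those three quantities. Your argument does essentially reproduce the paper's proof of Theorem~\ref{thm:cor-source} (filtration via Lemma~\ref{lem:volball-cantor}, triangulation via Lemma~\ref{lem:triangulate-cantor} with the norm estimated upstairs in the Cantor bundle rather than pushed down to $M$, integer straightening via Campagnolo--Sauer instead of permutation averaging, torsion-freeness to remove translation averaging, yielding the extra $(n+1)!$), but that is only the input to the corollary, not the corollary itself.

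The corollary is deduced from Theorem~\ref{thm:cor-source} by citing bridge results from the Braun--Sauer paper, and this deduction is what is missing from your proposal. For part (1), you need Theorem 3.10 of that paper, which says the von Neumann rank of a class in degree $d$ is at most $d$ times its $X$-parametrized integral simplicial norm; applied with $d = n$ this multiplies the bound from Theorem~\ref{thm:cor-source} by $n$, giving $n(n+1)16^n(n!)^3 V_1 \Vol M$. For part (2), you first need to note that $\widetilde{M}$ contractible forces $\Gamma$ to be torsion-free (a nontrivial finite cyclic subgroup would have a finite-dimensional $K(\mathbb{Z}/k,1)$, contradicting that $\mathbb{Z}/k$ has nonzero homology in arbitrarily high degrees), so Theorem~\ref{thm:cor-source} applies; then take $B\Gamma = M$ so that $j_*^{B\Gamma}c_*[M] = j_*^M[M]$, and invoke Theorem 3.7 of Braun--Sauer (from Schmidt's thesis), which bounds each $\ell^2$-Betti number by the $X$-parametrized integral simplicial norm of the fundamental class. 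Finally, the Euler characteristic bound follows because an $n$-dimensional aspherical closed manifold has nonzero $\ell^2$-Betti numbers only in degrees $1$ through $n$, and $\chi(M)$ is the alternating sum of these, so $\abs{\chi(M)}$ is at most $n$ times the common bound on $\beta_i^{(2)}(M)$. Without these three citations and the torsion-freeness observation, the corollary does not follow from what you wrote.
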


\begin{proof}
The proof is the same as in~\cite{braun21}.  For the statement about von Neumann rank, Theorem~3.10 of~\cite{braun21} states that the von Neumann rank of a class in degree $d$ is bounded above by $d$ times the corresponding $X$-parametrized integral $\ell^1$-semi-norm.  Our fundamental class is in degree $n$, so the bound on its von Neumann rank is $n$ times the bound from Theorem~\ref{thm:cor-source}.

For the statement about $\ell^2$-Betti numbers, Theorem~3.7 of~\cite{braun21}, attributed to~\cite{schmidt05}, states that every $\ell^2$-Betti number of a closed, oriented, connected manifold is bounded above by the $X$-parametrized $\ell^1$-semi-norm of its fundamental class.  Using the assumption that $M = B\Gamma$, this is the same as the quantity bounded by Theorem~\ref{thm:cor-source}.  An aspherical manifold of dimension $n$ can have nonzero $\ell^2$-Betti numbers only in degrees $1$ through $n$, and the alternating sum of $\ell^2$-Betti numbers equals the Euler characteristic, so summing the bound on all $\ell^2$-Betti numbers yields the bound on Euler characteristic.
\end{proof}

We turn our attention to proving Theorem~\ref{thm:cor-source}, which is the estimate that implies these corollaries.  The proof of Theorem~\ref{thm:big} needs to be modified only slightly to produce this estimate.  The difference is that we need to prove the following Lemma~\ref{lem:thick-rainbow}, which corresponds to Theorem~\ref{thm:rainbow}, so that we can estimate the norm of our thickened triangulation in the Cantor bundle without projecting it down to $M$.  

To state the lemma, we need to adapt the definition of $\pi_1$-killing vertex coloring.  Let $z = \sum_i f_i \otimes \sigma_i$ be a cycle in the chain complex $C(X; \mathbb{Z}) \otimes_{\mathbb{Z}[\Gamma]} C_d(\widetilde{P}; \mathbb{Z})$.  Informally, one way to obtain such a ``singular'' cycle is as the continuous image of a ``simplicial'' cycle.  More precisely, let $Y$ be a polyhedron, and let $\widehat{Y} \rightarrow Y$ be a covering space with deck transformation group $\Gamma$.  Let $C_d^{\Delta}(\widehat{Y}; \mathbb{Z})$ denote the simplicial chain group of the (infinite) simplicial complex $\widehat{Y}$.  Then $C(X; \mathbb{Z}) \otimes_{\mathbb{Z}[\Gamma]} C_d^{\Delta}(\widehat{Y}; \mathbb{Z})$ is a chain complex, and every $\Gamma$-equivariant continuous map $\widehat{\sigma} \co \widehat{Y} \rightarrow \widetilde{P}$ induces a map of chain complexes to $C(X; \mathbb{Z}) \otimes_{\mathbb{Z}[\Gamma]} C_d(\widetilde{P}; \mathbb{Z})$.  Any such map $\widehat{\sigma}$ also determines a continuous map $\sigma \co Y \rightarrow P$.  If $z$ is obtained in this way as the image of a cycle in $C(X; \mathbb{Z}) \otimes_{\mathbb{Z}[\Gamma]} C_d^{\Delta}(\widehat{Y}; \mathbb{Z})$---that is, if each $\sigma_i$ equals $\widehat{\sigma}\vert_{\Delta_i}$ for some simplex $\Delta_i$ of $\widehat{Y}$ and $\sum_{i} f_i \otimes \Delta_i$ forms a cycle---then we say that $Y$ is a \textbf{\textit{domain complex}} for $z$.  Note that every cycle $z$ admits a domain complex, by taking one simplex for each $\sigma_i$ and identifying the $(d-1)$-dimensional faces of two such simplices whenever the corresponding elements of $C_{d-1}(\widetilde{P}; \mathbb{Z})$ differ by a factor in $\mathbb{Q}[\Gamma]$.

Then a $\pi_1$-killing vertex coloring of $z$ is defined to be a coloring of the vertices of any domain complex $Y$ for $z$, such that if we take all the edges of $Y$ for which both vertices are a given color, the restriction of $\sigma$ to this $1$-complex is a null-homotopic map to $P$.  Note that homotoping the map from $Y$ to $P$ results in a cycle homologous to $z$, by homotoping the various $\sigma_i$ while preserving the various $f_i$.  Under such a homotopy, the vertex coloring of $Y$ remains $\pi_1$-killing.

%We construct an associated polyhedron $Y$ by taking one closed $d$-dimensional Euclidean simplex for each $\sigma_i$, and identifying the $(d-1)$-dimensional faces of two such simplices whenever the corresponding elements of $C_{d-1}(\widetilde{P}; \mathbb{Z})$ differ by a factor in $\mathbb{Q}[\Gamma]$.  Then $Y$ has a continuous map into $P$, given by applying each $\sigma_i$ to its corresponding simplex, and then projecting from $\widetilde{P}$ to $P$.  A $\pi_1$-killing vertex coloring colors the vertices of $Y$ such that for each color, if we take the $1$-complex consisting of all edges of $Y$ for which both vertices are that color, then the restriction to this $1$-complex of the map from $Y$ into $P$ is null-homotopic.  We refer informally to the $\pi_1$-killing coloring as coloring the ``thick vertices'' of the cycle.  

\begin{lemma}\label{lem:thick-rainbow}
Let $P$ be a path-connected CW complex, let $\Gamma = \pi_1(P)$, and suppose that $\Gamma$ is torsion-free.  Consider a free, continuous action of $\Gamma$ on the Cantor set $X$, with a $\Gamma$-invariant probability measure $\mu$.  Abusing notation, let $c \co X \times \widetilde{P} \rightarrow X \times E\Gamma$ be the $\Gamma$-equivariant map corresponding to the classifying map $c \co P \rightarrow B\Gamma$.  Let $z = \sum_i f_i \otimes \sigma_i$ be a cycle in the chain complex $C(X; \mathbb{Z}) \otimes_{\mathbb{Z}[\Gamma]} C_d(\widetilde{P}; \mathbb{Z})$, with a $\pi_1$-killing vertex coloring.  Then the $X$-parametrized integral $\ell^1$-semi-norm of the image of the homology class of $z$ satisfies
\[\Vert c_*[z] \Vert_{\mathbb{Z}}^X \leq (d+1)!\cdot\sum_{\mathrm{rainbow}\ \sigma_i} \int \abs{f_i}\ d\mu.\]
\end{lemma}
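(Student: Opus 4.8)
The plan is to follow the proof of Theorem~\ref{thm:rainbow}, keeping the Cantor thickening throughout as in Lemma~\ref{lem:triangulate-cantor}, and to replace its real-coefficient straightening step by an integral one. Using the $\Gamma$-equivariant map $c\co X\times\widetilde{P}\to X\times E\Gamma$ from the statement, push $z$ forward; abusing notation, continue to write $z=\sum_i f_i\otimes\sigma_i$ for the resulting cycle in $C(X;\mathbb Z)\bigotimes_{\mathbb Z[\Gamma]}C_*(E\Gamma;\mathbb Z)$, whose class is $c_*[z]$. Because the coloring is $\pi_1$-killing, for each color the associated $1$-complex of thick edges projects to a $1$-complex in $P$ that is null-homotopic, hence maps null-homotopically into $B\Gamma$ as well; I would use this to homotope $z$ $\Gamma$-equivariantly inside $X\times E\Gamma$ so that for each color all thick vertices and all thick edges of that color go to a single point of $E\Gamma$. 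As in Lemma~\ref{lem:triangulate-cantor}, Proposition~\ref{prop:finiteness} ensures that only finitely many local patterns occur over any bounded region, so this homotopy respects the thick structure and the result is again a cycle representing $c_*[z]$. After the collapse, a thick simplex $f_i\otimes\sigma_i$ that is not rainbow has two thick vertices of a common color, now coinciding, so $\sigma_i$ becomes a degenerate singular simplex; a rainbow thick simplex still has $d+1$ distinct vertices.

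The second step is the integral straightening. In the proof of Theorem~\ref{thm:rainbow} real coefficients enter twice: in averaging over the $(d+1)!$ orderings of the vertices of a simplex, and in averaging over those $\Gamma$-translates that permute a simplex's vertices. The latter averaging is superfluous here: if a nontrivial $\gamma\in\Gamma$ mapped the vertex set of a simplex to itself, it would permute those finitely many vertices, so a power of $\gamma$ would fix each of them and, by freeness of the $\Gamma$-action on $E\Gamma$, equal the identity, forcing $\gamma$ to have finite order. For the former, I would invoke the integral straightening of Campagnolo and Sauer~\cite{campagnolo19}: fix, once and for all, a $\Gamma$-equivariant, face-compatible choice of a straight $k$-simplex for every ordered $(k+1)$-tuple of points of $E\Gamma$---possible over $\mathbb Z$ because $E\Gamma$ is contractible and $\Gamma$ acts freely on ordered tuples---and set $\operatorname{str}(\sigma)=\sum_{\pi\in S_{d+1}}\operatorname{sign}(\pi)\cdot[\text{straight simplex on }(v_{\pi(0)},\dots,v_{\pi(d)})]$, where $v_0,\dots,v_d$ are the vertices of $\sigma$. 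This $\operatorname{str}$ is a $\Gamma$-equivariant, augmentation-preserving chain endomorphism of $C_*(E\Gamma;\mathbb Z)$; it kills every singular simplex two of whose vertices coincide, because the permutations producing a given reordered vertex tuple form a coset of a subgroup of $S_{d+1}$ that contains a transposition, over which the signs sum to zero; and on a simplex with distinct vertices it is an integral chain of $\ell^1$ norm at most $(d+1)!$. Since $C_*(E\Gamma;\mathbb Z)$ is a free $\mathbb Z[\Gamma]$-resolution of $\mathbb Z$, the map $\operatorname{str}$ is $\mathbb Z[\Gamma]$-chain homotopic to the identity, and tensoring such a homotopy with $C(X;\mathbb Z)$ shows that $(\operatorname{id}\otimes\operatorname{str})(z)$ is a cycle in the Cantor-bundle chain complex still representing $c_*[z]$.

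What remains is bookkeeping. Write $\operatorname{str}(\sigma_i)=\sum_j n_{ij}\tau_{ij}$, so $\operatorname{str}(\sigma_i)=0$ when $\sigma_i$ is not rainbow and $\sum_j\abs{n_{ij}}\le(d+1)!$ otherwise; then $(\operatorname{id}\otimes\operatorname{str})(z)=\sum_{i,j}(n_{ij}f_i)\otimes\tau_{ij}$, and after grouping $\Gamma$-translates the triangle inequality gives
\[\Bigl\lvert(\operatorname{id}\otimes\operatorname{str})(z)\Bigr\rvert_1\le\sum_{\mathrm{rainbow}\ i}\Bigl(\sum_j\abs{n_{ij}}\Bigr)\abs{\int f_i\,d\mu}\le(d+1)!\sum_{\mathrm{rainbow}\ i}\abs{\int f_i\,d\mu}.\]
As this cycle represents $c_*[z]$, this is exactly the asserted bound on $\Vert c_*[z]\Vert_{\mathbb Z}^X$.

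The step I expect to be the main obstacle is the second one: extracting from~\cite{campagnolo19} an integral straightening with precisely the properties used above---$\Gamma$-equivariance, a chain-map property, vanishing on every simplex with a repeated vertex (not merely the simplicially degenerate ones), and $\ell^1$ norm at most $(d+1)!$ on nondegenerate simplices---and verifying that it is compatible with thickening by clopen subsets of $X$ and with the finiteness afforded by Proposition~\ref{prop:finiteness}. The genuinely new point, absent from~\cite{braun21} and from~\cite{campagnolo19}, is the observation that torsion-freeness of $\Gamma$ removes the need to average over translations; the remainder, including the equivariant collapse of the color classes, parallels the earlier arguments.
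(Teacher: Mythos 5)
Your overall plan matches the paper's: push $z$ to $X\times E\Gamma$, collapse each color class to a point, observe that the no-torsion hypothesis removes the need to average over $\Gamma$-translations, and replace Gromov's real-coefficient straightening with an integral one from~\cite{campagnolo19}. The torsion-freeness observation you make is exactly the one the paper uses, and the final $(d+1)!$ bound comes out the same. But the explicit formula you write for the straightening is not a chain map, and this is a genuine gap rather than a cosmetic one.

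Concretely, you set $\operatorname{str}(\sigma)=\sum_{\pi\in S_{d+1}}\operatorname{sign}(\pi)\cdot\tau(v_{\pi(0)},\dots,v_{\pi(d)})$, where $\tau$ is a $\Gamma$-equivariant, face-compatible choice of straight simplex for each ordered tuple, and you assert that this is a chain endomorphism. It is not. Already for $d=1$ with $\sigma$ having vertices $v_0,v_1$: $\operatorname{str}(\sigma)=\tau(v_0,v_1)-\tau(v_1,v_0)$, whose boundary is $2(v_1-v_0)$, whereas $\operatorname{str}(\partial\sigma)=v_1-v_0$. In general the unnormalized alternation satisfies $\partial\circ\operatorname{Alt}_d=(d+1)\operatorname{Alt}_{d-1}\circ\partial$, which is precisely why the classical real-coefficient straightening carries the factor $\tfrac{1}{(d+1)!}$ that you are trying to eliminate. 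The Campagnolo--Sauer construction you cite is genuinely different from the formula you wrote: it replaces each $d$-simplex by a signed sum of the $(d+1)!$ simplices of its \emph{barycentric subdivision} (not by the $(d+1)!$ reorderings of a single straight simplex with the given vertices). Barycentric subdivision is a chain homotopy equivalence by a standard argument, and Campagnolo and Sauer arrange the signs so that the resulting map is anti-invariant under permuting the vertices, which is what makes it vanish on simplices with a repeated vertex (a transposition both fixes and negates the chain, and the ambient free abelian group has no $2$-torsion). Your $\ell^1$-norm bound of $(d+1)!$ and your vanishing argument then go through verbatim with the correct construction substituted, so the fix is localized to this one step --- which is the step you yourself flagged as the likely obstacle.
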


\begin{proof}
The proof follows the proof of Theorem~\ref{thm:rainbow}: homotope $c(z)$ so that all vertices of each color and all edges among them go to a single point---or more precisely, a single $\Gamma$-orbit in $X \times E\Gamma$.  Then we straighten, needing to define the notion of straightening in such a way that each non-rainbow simplex maps to $0$ and each rainbow simplex maps to a sum of $(d+1)!$ simplices.  For this notion of straightening, we cite Theorem~3.3 of~\cite{campagnolo19}, which constructs a chain homotopy equivalence given by replacing each $d$-simplex by a signed sum of the $(d+1)!$ simplices of its barycentric subdivision, in such a way that permuting the vertices of the simplex results in multiplying by the sign of the permutation.  
%That is, let $\sigma$ be an original singular $d$-simplex, which comes with a fixed ordering on its vertices.  Let $\tau$ be a simplex of the barycentric subdivision of $\sigma$.  Then the vertices of $\tau$ correspond to faces of $\sigma$, which form a totally ordered chain.  To choose the singular simplex that $\tau$ contributes to the signed sum, we order the vertices of $\tau$ according to their order in this chain.  A chain of faces of $\sigma$ corresponds to an ordering of vertices of $\sigma$, so in the signed sum we let the coefficient of $\tau$ be the sign of the permutation relating this ordering to the original ordering of the vertices of $\sigma$.

Thus, to define the straightening, we assume that the straightening has already been constructed for dimensions less than $d$, and consider the $(d+1)$-tuples of points in $E\Gamma$, with the action of $\Gamma$ on these tuples.  For each orbit, we take one $(d+1)$-tuple in the orbit and select a $d$-simplex with those vertices, such that the boundary agrees with our lower-dimensional straightening choices, and take the signed barycentric subdivision to be our straightening.  Translating by $\Gamma$, this defines the straightening on all $(d+1)$-tuples in the same orbit.  Each tuple in the orbit is obtained by only one translation, because the assumption that $\Gamma$ is torsion-free implies that no $(d+1)$-tuple is mapped to a permutation of itself by a nontrivial element of $\Gamma$.  Thus, the straightening is well-defined, without the need for averaging and introducing non-integer coefficients.  It maps each ordered $(d+1)$-tuple of points in $E\Gamma$ to a singular chain, in such a way that transposing two vertices in the ordering multiplies the chain by $-1$.  The straightening of any non-rainbow simplex is $0$, because its two vertices of the same color become the same point of $E\Gamma$.
\end{proof}

In~\cite{loeh22}, L\"oh, Moraschini, and Sauer prove a theorem similar to Lemma~\ref{lem:thick-rainbow}, where no simplices are rainbow, but where the 1-complex of each color is assumed to generate an amenable subgroup of $\pi_1(P)$, rather than a trivial subgroup.  Using the lemma, we can finish the proof of Theorem~\ref{thm:cor-source}.

\begin{proof}[Proof of Theorem~\ref{thm:cor-source}]
The proof follows the proof of Theorem~\ref{thm:big}.  The main change is in the conclusion of Lemma~\ref{lem:triangulate-cantor}, which becomes the following: there is a cycle $z = \sum_i f_i \otimes \sigma_i$ in the homology class of $j_*^M[M]$ with a $\pi_1$-killing vertex coloring, such that
\[\sum_{\mathrm{rainbow}\ \sigma_i} \int \abs{f_i}\ d\mu = 2^n \cdot \Area_0\left(\widetilde{Z}_0\right).\]
The only change needed to prove this statement is to estimate the norms up in $X \times \widetilde{M}$ instead of mapping down to $M$.  That is, for each $n$-dimensional simplex $E$ of the triangulated polyhedron $Z_n$, instead of taking the singular simplex $\varphi\vert_E$ in $M$ with coefficient $\pm \mu(A_E)$, we take the element $\pm 1_{A_E} \otimes \widetilde{\varphi\vert_E}$, where $1_{A_E}$ denotes the characteristic function of $A_E$ in $X$.  This forms a cycle $z'$ in $C(X; \mathbb{Z}) \otimes_{\mathbb{Z}[\Gamma]} C_n(\widetilde{M}; \mathbb{Z})$, and its barycentric subdivision $z$ is the cycle we want, with $\pi_1$-killing coloring coming from coloring $Z_n$ by connected components of the various levels $Z_{i} \setminus Z_{i-1}$, as in Lemma~\ref{lem:triangulate-cantor}.

To check that $z'$ is indeed a cycle, consider any $(n-1)$-dimensional face $F$ of $Z_n$.  In $\partial z'$, the singular simplex $\widetilde{\varphi\vert_F}$ may appear several times as part of the boundaries of singular simplices $\widetilde{\varphi\vert_E}$ for various $n$-dimensional faces $E$ of $Z_n$, and we need to check that the sum of their contributions to the boundary of $z'$ is zero.  The definition of thick triangulation requires $A_E \subseteq A_F$ for every such $E$, as well as requiring that the thick faces partition $X \times \widetilde{M}$.  Thus, one side of the thick face $A_F \times \widetilde{\varphi\vert_F}(F)$ in $X \times \widetilde{M}$ is divided into thick faces $A_E \times \widetilde{\varphi\vert_E}(E)$ that each contribute $+1_{A_E} \otimes \widetilde{\varphi\vert_F}$ to $\partial z'$, and the other side is divided into thick faces that each contribute $-1_{A_E} \otimes \widetilde{\varphi\vert_F}$ to $\partial z'$.  The total sum is $(+1_{A_F} - 1_{A_F}) \otimes \widetilde{\varphi\vert_F}$, which is zero.  Checking every $(n-1)$-dimensional face $F$ of the thick triangulation $\widetilde{Z}_n$ in this way shows that $z'$ is a cycle.

Applying Lemma~\ref{lem:thick-rainbow}, we obtain the inequality
\[\Vert c_*j_*^M[M] \Vert_{\mathbb{Z}}^X \leq (n+1)!\cdot 2^n \cdot \Area_0\left(\widetilde{Z}_0\right).\]
We observe that $c_*j_*^M[M]$ and $j_*^{B\Gamma}c_*[M]$ mean the same thing: take the class on $X \times E\Gamma$ corresponding to the fundamental homology class on $M$.  Thus we have
\[\Vert j_*^{B\Gamma}c_*[M] \Vert_{\mathbb{Z}}^X \leq (n+1)!\cdot 2^n\cdot \Area_0\left(\widetilde{Z}_0\right),\]
and the remainder of the proof of Theorem~\ref{thm:big} proceeds as before.  Namely, for every $\varepsilon > 0$ we apply Lemma~\ref{lem:volball-cantor} with $R = 1$, to get a filtration where the inequality above holds; then, combining with Lemma~\ref{lem:inequality-string} and taking $\varepsilon \rightarrow 0$ gives
\[\Vert j_*^{B\Gamma}c_*[M] \Vert_{\mathbb{Z}}^X \leq (n+1)!\cdot 16^n(n!)^2\cdot V_1(\widetilde{M}) \cdot \Vol M = (n+1)16^n(n!)^3\cdot V_1(\widetilde{M}) \cdot \Vol M.\]
\end{proof}

\bibliography{reproof-bib}
\bibliographystyle{amsalpha}
\end{document}